\newtheorem{prop}{\bf Proposition}[section]
\newtheorem{thm}[prop]{\bf Theorem}
\newtheorem{lem}[prop]{\bf Lemma}
\newtheorem{ex}[prop]{\bf Example}
\newtheorem{defi}[prop]{\bf Definition}
\newtheorem{rmk}[prop]{\bf Remark}
\def\R{\mathbb{R}}
\def\C{\mathbb{C}}
\def\Z{\mathbb{Z}}
\def\bd{{\partial}}
\def\vol{\text{Vol} }
\def\ms{\medskip\noindent}
\def\ss{\smallskip\noindent}
\title{On the number of hyperbolic $3$-manifolds of a given volume}
\author[Hodgson]{Craig Hodgson}
\address{Department of Mathematics and Statistics,
University of Melbourne, Parkville, Victoria 3010, Australia}
\email{craigdh at unimelb.edu.au}
\author[Masai]{Hidetoshi Masai}
\address{Department of Mathematical and Computing Sciences, Tokyo Institute of
Technology, O-okayama, Meguro-ku, Tokyo 152-8552 Japan}
\email{masai9 at is.titech.ac.jp}
\date{}
\begin{document}

\thispagestyle{headings}

\begin{abstract}
The work of J{\o}rgensen and Thurston shows that there is a finite number $N(v)$ 
of orientable hyperbolic 3-manifolds with any given volume  $v$.   
We show that there is an infinite sequence of closed orientable hyperbolic 3-manifolds,
obtained by Dehn filling on the figure eight knot complement, 
that are uniquely determined by their volumes. This gives a sequence of distinct 
volumes $x_i $ converging to the volume of the figure eight knot complement 
with $N(x_i) = 1$ for each $i$.  We also give
an infinite sequence of 1-cusped hyperbolic 3-manifolds, obtained
by Dehn filling one cusp of  the $(-2,3,8)$-pretzel link complement, 
that are uniquely determined by their volumes amongst orientable cusped 
hyperbolic 3-manifolds. 
Finally, we describe examples showing that the number of hyperbolic link complements
with a given volume $v$ can grow at least exponentially fast with $v$.
\end{abstract}

\maketitle

\section{Introduction}
Thurston and J{\o}rgensen (see \cite{Th}) showed that the set of volumes of finite volume 
orientable complete hyperbolic $3$-manifolds is a closed, non-discrete, well ordered 
subset of $\R_{>0}$ of order type $\omega^\omega$. 
Further, the number $N(v)$ of complete orientable hyperbolic $3$-manifolds of volume $v$ is finite 
for all $v\in \mathbb{R}_{>0}$. In this paper we study how $N(v)$ varies with $v$.

There has been much work on determining the lowest volumes in various classes of
hyperbolic 3-manifolds.
The work of Gabai-Meyerhoff-Milley (\cite{GMM1,GMM2,M}) 
shows that the Weeks manifold  
with volume $v_1 = 0.9427 \ldots$ is the unique orientable 
hyperbolic 3-manifold of lowest volume, so that $N(v_1)=1$.
Prior to our work,  this was the only known exact value for $N(v)$.
 
 It is also interesting  to look at the number of hyperbolic 3-manifolds of a given volume in various special classes.
For example, we could study the numbers $N_C (v)$, $N_A(v)$, $N_L(v)$, $N_G(v)$
of orientable hyperbolic 3-manifolds of volume $v$ that are cusped, arithmetic, link complements, or with
geodesic boundary, respectively. 

The work of Cao-Meyerhoff \cite{CM} shows there are precisely two orientable cusped hyperbolic 3-manifolds
of lowest volume $v_\omega \approx 2.029883\ldots$, the figure eight knot complement 
and its sister (obtained by (-5,1) surgery on the Whitehead link complement). 
(See Figure \ref{fig8_fig8sister}.) 

\begin{figure}[h]
\hspace{0.5cm}
\includegraphics[scale=1]{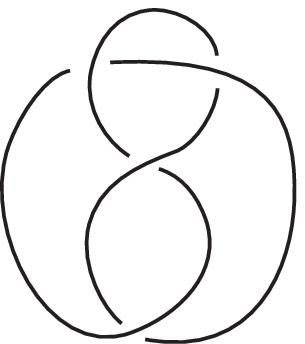}\hspace{2cm} \includegraphics[scale = 1]{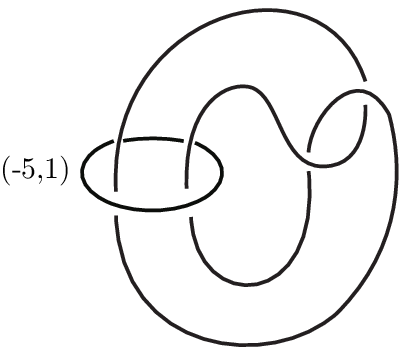}
\caption{The figure eight knot and its sister, with complements denoted $m004$ and $m003$ respectively in SnapPea notation.}
\label{fig8_fig8sister}
\end{figure}

Gabai-Meyerhoff-Milley \cite{GMM1} determined the  first 10 low volume orientable cusped hyperbolic 3-manifolds.
This shows that the next 4 limit volumes are:
$$ v_{2\omega} = 2.568970\ldots, v_{3\omega}=2.666744\ldots ,
v_{4\omega}= 2.781833\ldots, v_{5\omega} = 2.828122\ldots$$
and that
$N_C(v_\omega) = 2 ,  N_C(v_{2\omega})=2,  N_C(v_{3\omega})=2, 
N_C(v_{4\omega})=1, N_C(v_{5\omega})=3.$

The work of Agol \cite{agol_2cusp} shows that there are exactly two orientable 2-cusped hyperbolic 
3-manifolds of lowest volume $v_{\omega^2} = 3.663862\ldots$, namely the Whitehead link complement
and the $(-2,3,8)$-pretzel link complement. (See Figure \ref{fig.WL_WS}.)

\begin{figure}[h]
\includegraphics[scale=0.95]{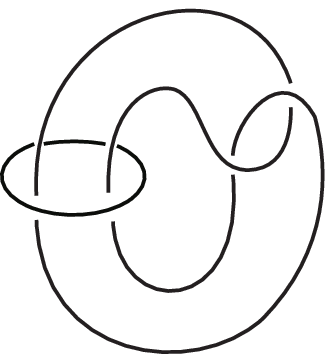}\hspace{2cm} \includegraphics[scale = 1.1]{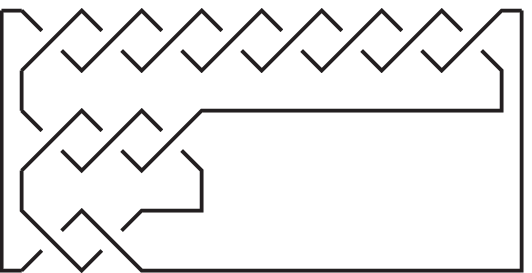}
\caption{The Whitehead link and the $(-2,3,8)$-pretzel link, with complements $m129$ and $m125$ in SnapPea notation.}
\label{fig.WL_WS}
\end{figure}

Chinburg-Friedman-Jones-Reid  \cite{CFJR} showed that
 the Weeks manifold and the Meyerhoff manifold are the unique 
 arithmetic hyperbolic 3-manifolds of lowest volume; hence
 $N_A(0.942707\ldots) = 1$ and $N_A(0.981368\ldots) = 1$.

Kojima and Miyamoto \cite{KojMiy,miyamoto} showed the lowest volume for
a compact 
hyperbolic 3-manifold with geodesic boundary is $6.451990\ldots$,  and 
Fujii \cite{Fu} showed that there are exactly 8 such manifolds with this volume.

\bigskip
The graphs in Figures \ref{closed_cusped_vols} and \ref{cusped_vols} below
show the frequencies of volumes arising from the manifolds
contained in the Callahan-Hildebrand-Weeks census of  
cusped hyperbolic 3-manifolds (see \cite{CaHiWe}),
 and the Hodgson-Weeks census of closed hyperbolic 3-manifolds (see \cite{HW}).
These give {\em lower bounds} on $N(v)$, but note that  infinitely many manifolds
 are definitely missing in these pictures. Nevertheless, the results
 suggest that there are often very few manifolds with any given volume.

\begin{figure}[h]
\includegraphics[scale=1]{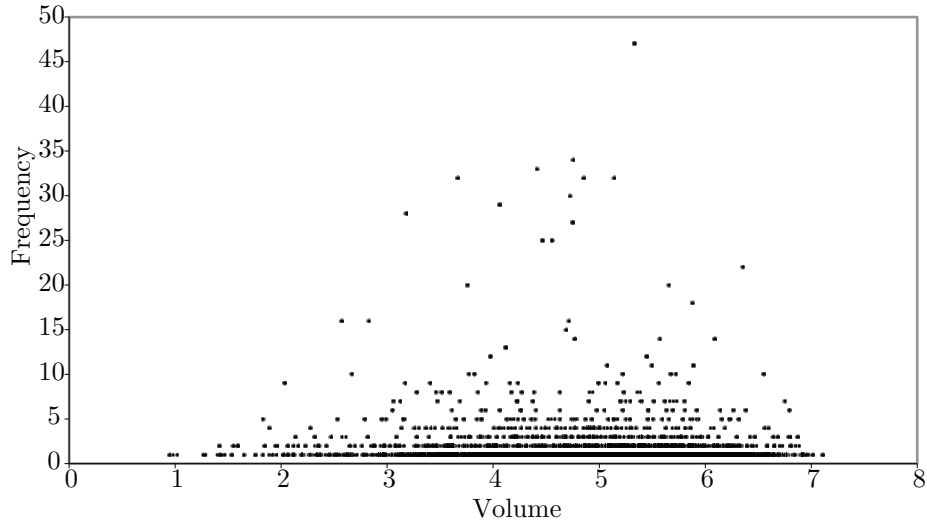}
\caption{Frequencies of volumes of hyperbolic 3-manifolds  in the closed and cusped censuses.}
\label{closed_cusped_vols}
\end{figure}

\begin{figure}[h]
\includegraphics[scale=1]{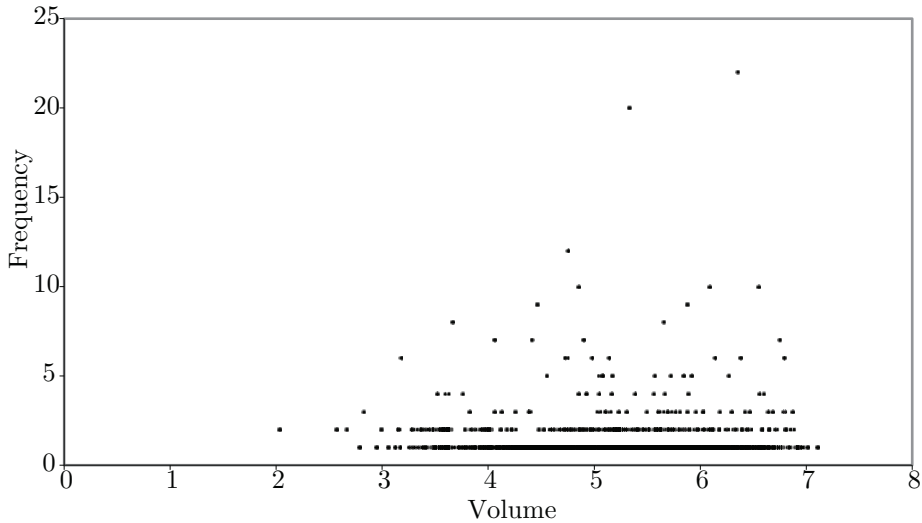}
\caption{Frequencies of volumes of hyperbolic 3-manifolds in the cusped census.}
\label{cusped_vols}
\end{figure}

\medskip
 The first main result of this paper (Theorem \ref{closed_unique}) shows that there is an infinite sequence of closed hyperbolic 3-manifolds determined by their volumes, obtained by Dehn fillings on the figure eight knot complement.
 Hence $N(x_i)=1$ for an infinite sequence of volumes $x_i$ converging to $v_\omega$ from below.

The proof of Theorem \ref{closed_unique} uses the asymptotic formula of Neumann-Zagier \cite{NZ} for the change in volume during hyperbolic Dehn filling and a study of the quadratic form giving the squares of lengths of closed geodesics on a horospherical torus cusp cross section, together with some elementary number theory.

\medskip
Our second major result (Theorem \ref{cusped_unique}) 
shows that there is an infinite sequence of 1-cusped hyperbolic 3-manifolds determined by their volumes amongst orientable  cusped  hyperbolic 3-manifolds. 
These manifolds are obtained by Dehn fillings on one cusp of the $(-2,3,8)$-pretzel link complement. Hence $N_C(y_i)=1$ for an infinite sequence of volumes $y_i$ converging to $v_{\omega^2}$ from below.

The proof of Theorem \ref{cusped_unique} is similar to the proof of Theorem \ref{closed_unique}, 
but there are some additional
complications that require us to study higher order terms in the Neumann-Zagier asymptotic formula.

\medskip
Another obvious feature of the graphs in Figures \ref{closed_cusped_vols} and \ref{cusped_vols} 
is that there are special values of $v$ for which $N(v)$ is particularly large. 
So it is natural to ask how fast $N(v)$ can grow with $v$.

If we fix a natural number $n$,  then Wielenberg \cite{Wie} showed that there exists $v$ such that $N(v)>n$,
and  Zimmerman  \cite{Zim} constructed $n$ distinct closed hyperbolic $3$-manifolds 
which share the same volume. In both results, 
the number of manifolds of volume $v$ constructed was at most linear  in $v$.

More generally, by considering covering spaces of a fixed non-arithmetic 
hyperbolic 3-manifold whose fundamental group surjects onto a free group of rank 2,
it is possible to find sequences of volumes $x_i \to \infty$ and a constant $c >0$
such that $N(x_i) > x_i^{c \,x_i}$ for all $i$. This argument, known to Thurston 
and Lubotzky, can be found in \cite[pp2633-2634]{carlip} or \cite[p1162]{BGLM}. 

Recent work of  Belolipetsky-Gelander-Lubotzky-Shalev \cite{BGLS} 
gives very  precise upper and lower bounds on the number $N_A(v)$ of
{\em arithmetic} hyperbolic 3-manifolds of volume $v$: there exist constants
$a,b>0$ such that for $x >>0$, 
$$ x^{ax} \le \sum_{x_i \le x} N_{A}(x_i) \le  x^{bx}.$$
In particular, this implies that for all sufficiently large $x$,
$$ x^{ax} \le ~\max_{x_i \le x} N_{A}(x_i)~ \le  x^{bx}.$$
 
In \cite{FMP2} (and \cite{FMP1}), Frigerio, Martelli and Petronio constructed explicit families
of hyperbolic $3$-manifolds with totally geodesic boundary 
giving sequences $x_i \to \infty$ such that the number $N_{G} (x_i)$ of hyperbolic 3-manifolds of volume $x_i$ with geodesic boundary grows at least as fast as $x_i^{c x_i}$ for some $c>0$.

\medskip
In the last part of this paper, we investigate
the number $N_L(v)$ of hyperbolic link complements in $S^3$ with volume $v$.
In Theorem \ref{thm.expo} 
we construct sequences of volumes  $v_n \to \infty$ 
such that the number $N_L(v_n)$ of hyperbolic link complements with volume $v_n$ grows at least exponentially fast with $v_n$. 

Our main tool is mutation along totally geodesic thrice punctured spheres in fully augmented link complements.
It is well known that mutation along certain special surfaces leaves the hyperbolic volume
 and many other invariants unchanged (see for example \cite{Ada}, \cite{Rub},  \cite{DGST}).
 We distinguish the manifolds produced using information 
 about their cusp shapes and maximal horoball cusp neighbourhoods. 

Recent independent work of Chesebro-DeBlois \cite{CD} uses 
mutations along 4-punctured spheres to show that  $N_L(v)$ grows 
at least exponentially fast with $v$, and that the number of commensurability classes
of link complements with a given volume can be arbitrarily large.

Mutation along thrice punctured spheres can be studied by using spatial trivalent graphs. 
We will say that a spatial trivalent graph is {\em hyperbolic} if its complement admits a hyperbolic metric with parabolic meridians and geodesic boundaries (see section \ref{sec.Vol}, \cite{HHMP} and \cite{Mas}).
In \cite{Mas}, the second author exhibited volume preserving moves on hyperbolic graphs and hyperbolic links, which correspond to cutting and gluing along thrice punctured spheres.
By applying these moves, each hyperbolic graph with a planar diagram corresponds to a {\em fully augmented link}.
Such links have been well studied, and we can easily compute the moduli of their cusps (see \cite{FP} and \cite{Pur}).

\medskip
Here is a brief outline of the remainder of the paper.  
In section 2, we recall the results of Neumann and Zagier relating the decrease in volume during 
hyperbolic Dehn filling to the length of the surgery curve. We then explain the key ideas that
will allow us to obtain manifolds uniquely determined by their volumes. 
In section 3 (resp. 4), we apply these ideas to the manifolds denoted $m003$ and $m004$ (resp. $m125$ and $m129$) in
SnapPea notation, and produce infinitely many manifolds that are determined by their volumes
amongst all (resp. all cusped) orientable hyperbolic 3-manifolds.
In section 5, we compute some higher order terms in  the Neumann-Zagier
asymptotic formula for the examples considered in sections 3 and 4;
some of those computations are used in section 4.
In section \ref{sec.Vol}, we review some of the volume preserving moves on graphs from \cite{Mas}. 
In section \ref{pack}, we compute the moduli of the cusps of the manifolds that are obtained 
from the graph in Figure \ref{fig.graphW}. 
Then in section \ref{const}, we construct manifolds corresponding to binary cyclic words of $n$ letters and show that 
these manifolds are isometric if and only if their cyclic words are related by the natural action of the dihedral group $D_n$. 
Since the volume of these manifolds is a constant multiple of $n$, 
this gives a sequence $\mathcal{L}_n$ of families of distinct hyperbolic link complements which share the same 
volume $v_n$, such that the growth rate of $\#\mathcal{L}_n$ is at least exponential in $v_n$.
Finally,  in section 9, we collect some open problems related to the work in this paper.

\medskip\noindent
{\bf Acknowledgments:}
We thank Sadayoshi Kojima and Danny Calegari for asking questions that led to this paper.
We also thank Alan Reid,  Rob Meyerhoff, Ze'ev Rudnick  and Peter Sarnak for their helpful comments.
The work of the first author was partially supported by the Australian Research Council grant DP1095760.
The work of the second author was partially supported by the Global COE program
 ``Computationism as a Foundation for the Sciences".
Much of this work was done during the second author's visit to the University of Melbourne; this visit was supported by the above Global COE program.

\section{Hyperbolic Dehn fillings determined by their volumes}\label{cusped_hyp}

Let $M$ be a cusped hyperbolic 3-manifold and let $T$ be a horospherical
torus cross section of one of the cusps of $M$.
Choose an oriented basis for $H_1(T;\Z) \cong \Z^2$,  called the ``meridian'' and ``longitude'' for $T$, and 
let $M(a,b)$ be the result of Dehn filling along the 
simple closed curve with homology class $(a,b) \in H_1(T;\Z) \cong \Z^2$.

Let $A$ denote the area of $T$ and $L(a,b)$ the length of the geodesic on $T$ with 
homology class $(a,b)$. Then we will study the quadratic form
\begin{equation}\label{eq1}
Q(a,b) = {L(a,b)^2 \over A}.
\end{equation}
(This gives  the square of the length of  $(a,b)$ when the torus $T$ is rescaled to have area 1,
and is also known as the extremal length of the curve $(a,b)$.)

\begin{rmk}(Cusp Shape Parameters)\label{cusp_param}
{\em 
The horospherical torus $T$ is similar to the Euclidean torus  $\C/(\Z+\tau\Z)$, 
with {\em shape parameter} $\tau \in \C$ defined by
\begin{equation}\label{eq2}
\tau = (\text{complex translation of longitude})/
(\text{complex translation of meridian}),
\end{equation}
where the torus is viewed from the cusp. 
Note that with the standard orientation 
convention  for a link complement in $S^3$ or
for an oriented manifold in SnapPea, this means that $\tau$ will
have {\em negative} imaginary part.\footnote{Caution: this convention
agrees with that used in Snap. However, in SnapPy the `shape' reported 
by the function {\tt cusp\_info()}
is the {\em complex conjugate} $\bar \tau$.}
Then we have
\begin{equation}\label{eq3}
Q(a,b) = {  |a + b \tau|^2 \over | \text{Im}( \tau )| }.
\end{equation} 
}
\end{rmk}

\medskip
The following result shows that the volume of $M(a,b)$ determines 
$Q(a,b)$ up to a uniformly bounded error.
This observation will play a key role in our work.

\begin{prop}\label{error_bound}  Let $V_0$ be the volume of the complete hyperbolic structure on $M$.
Then there exist constants $C_1=C_1(M), C_2 =C_2(M) >0$ 
such that
\begin{enumerate}
\item[(a)] the Dehn filling $M(a,b)$ is hyperbolic whenever  $L(a,b) > C_1$, and
\item[(b)] for all hyperbolic Dehn fillings $M(a,b)$,
\begin{equation}\label{eq4}
\left| {\pi^2  \over \Delta V(a,b)}  - {Q(a,b)} \right| < C_2
\end{equation} 
where $\Delta V(a,b) = V_0 - \vol(M(a,b))$.
\end{enumerate}
\end{prop}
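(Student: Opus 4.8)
The plan is to derive both parts from Thurston's hyperbolic Dehn surgery theorem \cite{Th} together with the Neumann--Zagier asymptotic expansion \cite{NZ} for the change of volume under filling, and then to upgrade the asymptotic estimate in (b) to a bound that is uniform over \emph{all} hyperbolic fillings by an elementary counting argument.

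For part (a), I would invoke Thurston's Dehn surgery theorem, which guarantees that all but finitely many slopes on the chosen cusp yield hyperbolic Dehn fillings. Because $Q$ is a positive definite binary quadratic form, for every $B>0$ the set of classes $(a,b)$ with $Q(a,b)=L(a,b)^2/A\le B$ is finite; equivalently, only finitely many slopes have length below any given bound. Hence the finitely many non-hyperbolic slopes have lengths bounded above, and choosing $C_1$ larger than that bound proves (a).

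For part (b), I would begin from the Neumann--Zagier formula, which, with $Q$ as in \eqref{eq1}, reads
\[
\Delta V(a,b) = \frac{\pi^2}{Q(a,b)} + O\!\left(\frac{1}{Q(a,b)^2}\right)
\qquad\text{as } L(a,b)\to\infty .
\]
Since Dehn filling strictly decreases volume we have $\Delta V(a,b)>0$ for every hyperbolic filling, so $\pi^2/\Delta V(a,b)$ is well defined, and inverting the expansion gives
\[
\frac{\pi^2}{\Delta V(a,b)} = Q(a,b)\bigl(1+O(Q(a,b)^{-1})\bigr)^{-1} = Q(a,b)+O(1).
\]
Thus there are constants $B_0$ and $C_2'$ with $\bigl|\pi^2/\Delta V(a,b)-Q(a,b)\bigr|<C_2'$ whenever $Q(a,b)>B_0$.

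It remains to make the estimate uniform, which I regard as the main point. By the finiteness observed in part (a), there are only finitely many hyperbolic fillings with $Q(a,b)\le B_0$, and for each of these $\Delta V(a,b)>0$ makes $\bigl|\pi^2/\Delta V(a,b)-Q(a,b)\bigr|$ a finite number; let $C_2''$ be the maximum of these finitely many quantities. Then $C_2:=\max\{C_2',C_2''\}$ works for every hyperbolic filling. The only delicate points are this passage from an asymptotic bound (valid for long slopes) to a uniform one, handled via the strict positivity of $\Delta V$ and the finiteness of lattice points of bounded norm, and the fact that the uniform $O(Q^{-2})$ error term itself rests on Neumann and Zagier's analyticity of the volume as a function of the Dehn filling parameters near the complete structure.
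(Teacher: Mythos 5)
Your proposal is correct and follows essentially the same route as the paper: part (a) from Thurston's Dehn surgery theorem plus the finiteness of short slopes, and part (b) by inverting the Neumann--Zagier expansion $\Delta V = \pi^2/Q + O(Q^{-2})$ for large $Q$ and then absorbing the finitely many remaining hyperbolic fillings (where $\Delta V>0$ makes the quantity finite) into the constant $C_2$. The paper's proof is just a terser version of the same argument, so no further comparison is needed.
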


\begin{proof}

(a) follows from Thurston's hyperbolic Dehn surgery theorem \cite{Th}. 

(b)  The asymptotic volume formula  of Neumann-Zagier \cite{NZ} shows that the
decrease in volume under Dehn filling satisfies
\begin{equation}  \label{eq5}
\Delta V(a,b) = {\pi^2 \over Q(a,b)} +O\left({1 \over Q(a,b)^2}\right).
\end{equation}
This implies the result of (b) when $Q(a,b)$ is sufficiently large.
But there are only finitely many additional $(a,b)$ with $M(a,b)$ hyperbolic, so the result follows.
\end{proof}

\begin{rmk}
{\em
(a) If $T$ is a maximal embedded
horospherical torus, then we can take $C_1=6$ by the ``length 6 theorem'' of Agol \cite{agol_dehn} and 
Lackenby \cite{lack} and the Geometrization Theorem of Perelman.

(b) The work of Hodgson-Kerckhoff \cite[Theorem 5.12 and Figure 2]{HK}, 
and some numerical calculations  imply  that
\begin{equation}  \label{eq6}
-7.05 \le {\pi^2  \over \Delta V(a,b)} - Q(a,b)  \le 5.82
\end{equation}
whenever $Q(a,b) \ge 57.5041$ and $\Delta V \le 0.155$.
Then by volume computations using SnapPy or Snap for the finite number of hyperbolic Dehn fillings
with $\Delta V > 0.155$, an explicit value of $C_2$ can be determined for a given manifold $M$.

Numerical calculations using SnapPy suggest that 
$\left| {\pi^2  \over \Delta V(a,b)} - Q(a,b)\right|$
for hyperbolic Dehn fillings $M(a,b)$ is often considerably  lower
than these general bounds.}
\end{rmk}

\smallskip
We now study the quadratic form $Q(a,b)$ for a cusped hyperbolic 3-manifold $M$. Let
\begin{equation}  \label{eq7}
S_Q= \{ Q(a,b) : a,b \text{ are relatively prime integers} \}.
\end{equation}

\ss
{\bf Key Idea.} Let  $(a_0,b_0)$ be a pair of relatively prime integers such that
$M(a_0,b_0)$ is hyperbolic and let $q_0 = Q(a_0,b_0)$. Assume that 
\begin{enumerate}
\item[(i)]  there is large enough 2-sided gap around  $q_0$ in $S_Q$, and 
\item[(ii)] $Q(a,b)=q_0$  has few solutions with $(a,b)$ relatively prime integers.
\end{enumerate}
Then there are few Dehn fillings $M(a,b)$ with the same volume as 
$M(a_0,b_0)$.

\newpage
More precisely, 
\begin{prop} \label{few_mflds_with_vol} 
Let  $(a_0,b_0)$ be a pair of relatively prime integers such that
$M(a_0,b_0)$ is hyperbolic and let $q_0 = Q(a_0,b_0)$.
Let $C_2 = C_2(M) >0$ be as in Proposition \ref{error_bound}.
Assume that 
\begin{center}
there exists $g > 2C_2$ such that $s \in S_Q$ and $|s-q_0|< g$ implies $s= q_0$.
\end{center}
Then $\vol(M(a,b)) =  \vol(M(a_0,b_0)$ implies $Q(a,b)=Q(a_0,b_0)$.
Hence the number of such manifolds $M(a,b)$  
is at most  $n(q_0)/2$, where $n(q_0)$  is the number of solutions to $Q(a,b)=q_0$ with $a,b$ relatively prime integers.
 \end{prop}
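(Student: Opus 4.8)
The plan is to convert the equal-volume hypothesis into a near-equality of quadratic form values using Proposition \ref{error_bound}(b), and then invoke the gap assumption to upgrade this to an exact equality.

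First I would note that $\vol(M(a,b)) = \vol(M(a_0,b_0))$ gives $\Delta V(a,b) = \Delta V(a_0,b_0)$, since each is $V_0$ minus the common volume; hence $\pi^2/\Delta V(a,b) = \pi^2/\Delta V(a_0,b_0)$, a common value I will call $P$. Applying the bound \eqref{eq4} to each of the two (hyperbolic) fillings yields $|P - Q(a,b)| < C_2$ and $|P - q_0| < C_2$, so by the triangle inequality $|Q(a,b) - q_0| < 2C_2 < g$. Because a Dehn filling slope $(a,b)$ is a primitive class, $a$ and $b$ are relatively prime and therefore $Q(a,b) \in S_Q$; the gap hypothesis then forces $Q(a,b) = q_0 = Q(a_0,b_0)$, proving the first assertion.

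For the counting statement, I would observe that $(a,b)$ and $(-a,-b)$ specify the same unoriented surgery curve, so $M(a,b) = M(-a,-b)$, while $Q(-a,-b) = Q(a,b)$ since $Q$ is homogeneous of degree two. Thus the $n(q_0)$ coprime solutions of $Q(a,b) = q_0$ fall into $n(q_0)/2$ antipodal pairs, each contributing a single filled manifold. Combined with the first part, which shows that every $M(a,b)$ of the same volume as $M(a_0,b_0)$ satisfies $Q(a,b) = q_0$, this bounds the number of such manifolds by $n(q_0)/2$.

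I do not anticipate a substantive obstacle here: the core is a one-line triangle inequality powered by the error bound. The only points needing care are checking that $Q(a,b)$ really lies in $S_Q$ (which rests on the coprimality of admissible surgery slopes) and correctly accounting for the factor of two arising from the $(a,b) \mapsto (-a,-b)$ symmetry of surgery coefficients.
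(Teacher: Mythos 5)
Your proposal is correct and follows essentially the same argument as the paper: apply the error bound of Proposition \ref{error_bound} to both fillings, use the triangle inequality to get $|Q(a,b)-q_0| < 2C_2 < g$, invoke the gap hypothesis to conclude $Q(a,b)=q_0$, and then count via the identification $M(a,b)=M(-a,-b)$. Your additional remark that $Q(a,b)\in S_Q$ because surgery slopes are primitive is a point the paper leaves implicit, but it is not a substantive difference.
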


\begin{proof}
Assume that $M(a,b)$ is hyperbolic with $\vol(M(a,b)) =  \vol(M(a_0,b_0))$
 and let $\Delta V=  V_0 - \vol(M(a_0,b_0))$. 
 Then Proposition \ref{error_bound} implies 
$$\left| {\pi^2  \over \Delta V}  -Q(a,b) \right| < C_2 \text{ and } \left| {\pi^2  \over \Delta V}  -Q(a_0,b_0) \right| < C_2,$$
so 
 $$\left| Q(a,b)   - Q(a_0,b_0) \right| < 2C_2 < g.$$
  Hence $Q(a_0,b_0)=Q(a,b)$ by our assumption. 
  But $(a,b)$ and $(-a,-b)$ Dehn fillings on $M$ give the same manifold, so the result follows.
 \end{proof}
 
  \begin{rmk}  \label{few_with_symm} 
 {\em
More generally, there is a version of the previous Proposition which applies 
when the horospherical torus $T = \C/(\Z+\tau \Z)$ has symmetries other than those 
induced by the maps $(a,b) \mapsto \pm(a,b)$ that
 extend to symmetries of the cusped manifold $M$.

Let $G_Q$ be the group of integral automorphisms of the quadratic form  $Q$, 
 \begin{equation}  \label{eq8}
 G_Q = \{ A \in GL(2,\Z) :  Q(A X )= Q( X  ) \text{ for all } X=(x,y) \in \Z^2 \}.
 \end{equation}
For each isometry $g$ of $M$ preserving the cusp corresponding to $T$, the induced action on
homology gives an automorphism $g_*$ of $H_1(T) \cong \Z^2$. Let $G_M \subset G_Q$
be the subgroup generated by these automorphisms $g_*$ together the automorphism 
$(x,y) \mapsto (-x,-y)$.  Then for each $\sigma \in G_M$, the Dehn fillings of $M$ along
$(a,b)$ and $(a',b')=\sigma(a,b)$ give homeomorphic manifolds. 

Then the previous argument shows that, with the hypotheses and notation of Proposition \ref{few_mflds_with_vol},
the number of hyperbolic manifolds $M(a,b)$ with  $\vol(M(a,b)) =  \vol(M(a_0,b_0)$ 
is at most  $n(q_0)/|G_M|$, where $n(q_0)$  is the number of solutions to 
$Q(a,b)=q_0$ with $a,b$ relatively prime integers.
} \end{rmk}

\bigskip
If the cusp shape parameter $\tau$ belongs to an 
imaginary quadratic field, then the quadratic form 
$$Q(x,y) = |x+\tau y|^2= (x + \tau y)(x + \bar\tau y) = x^2 + (\tau+\bar\tau) xy + |\tau|^2 y^2$$
has rational coefficients. Hence, after rescaling, we can assume that
the coefficients are relatively prime integers.

If $Q$ is an {\em integral quadratic form} 
\begin{equation}  \label{eq9}
Q(x,y)= a x^2 + b xy + c y^2.
 \end{equation}
where $a,b,c \in \Z$, then $D = b^2 - 4ac$ is called
the {\em discriminant} of $Q$. 
The following elementary result can be found, for example, in 
\cite[Theorem 201, p180]{landau}, \cite[pp25--26]{cox} or \cite[pp49--50]{buell}.
Here, and henceforth,  we say that a pair of integers $(x,y)$ is {\em primitive} if $\gcd(x,y)=1$.
 
\begin{lem} \label{discr_quad_res} 
Assume that $m$ has a {primitive} integer representation by 
an integral  quadratic form $Q$, i.e.
$Q(x_0,y_0)=m$ where $(x_0,y_0) \in \Z^2$ with $\gcd(x_0,y_0)=1$.
Then the discriminant $D$ of $Q$ is a quadratic residue modulo $4m$.
In particular, every prime divisor $p$ of $m$ has Kronecker symbol $(D/p)=+1$.
\end{lem}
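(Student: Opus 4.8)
The plan is to reduce the whole statement to the familiar fact that an integral quadratic form properly equivalent to $Q$ has the same discriminant, choosing the equivalence so that the represented value $m$ becomes the new leading coefficient. First I would use the primitivity hypothesis $\gcd(x_0,y_0)=1$: by B\'ezout there are integers $z_0,w_0$ with $x_0 w_0 - y_0 z_0 = 1$, so
\[
A = \begin{pmatrix} x_0 & z_0 \\ y_0 & w_0 \end{pmatrix} \in SL(2,\Z).
\]
The substitution $(x,y)^{T} = A\,(X,Y)^{T}$ then carries $Q$ to the properly equivalent form
\[
Q'(X,Y) = Q(x_0 X + z_0 Y,\ y_0 X + w_0 Y) = m'X^2 + b'XY + c'Y^2,
\]
whose leading coefficient is precisely $m' = Q(x_0,y_0) = m$.

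The key step is that this unimodular change of variables preserves the discriminant. I would verify this using the symmetric (Gram) matrix $G = \begin{pmatrix} a & b/2 \\ b/2 & c\end{pmatrix}$ of $Q$, for which $D = -4\det G$: the Gram matrix of $Q'$ is $A^{T} G A$, and $\det A = 1$ gives $\det(A^{T}GA)=\det G$, so $Q'$ has the same discriminant $D$. Granting this, $D = (b')^2 - 4 m' c' = (b')^2 - 4 m c'$, which rearranges to
\[
(b')^2 \equiv D \pmod{4m}.
\]
Thus $D$ is congruent to a perfect square modulo $4m$, i.e.\ $D$ is a quadratic residue modulo $4m$, proving the main assertion.

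For the final clause I would reduce the displayed congruence modulo each prime divisor $p$ of $m$: since $p \mid 4m$ it yields $(b')^2 \equiv D \pmod p$, so $D$ is a square mod $p$, and for the odd primes $p\mid m$ with $p\nmid D$ this shows $D$ is a \emph{nonzero} square, whence the Kronecker symbol $(D/p)=+1$. I do not expect a genuine obstacle here, since the entire content is the change-of-basis normalization that puts $m$ in the leading slot; the only places needing care are the bookkeeping in the discriminant-invariance computation and the conventions of the Kronecker symbol at small primes (in particular at $p=2$ and at primes dividing $D$). As the result is classical, I would ultimately just cite \cite{landau,cox,buell}, but the argument above isolates the one idea that makes it work.
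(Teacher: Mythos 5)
Your argument is correct, and it is essentially the paper's "proof": the authors give none, simply citing \cite{landau}, \cite{cox} and \cite{buell}, and the classical argument in those references is exactly your change-of-variables normalization via an $SL(2,\Z)$ matrix with first column $(x_0,y_0)$, giving $D=(b')^2-4mc'\equiv (b')^2 \pmod{4m}$. Your caveat that the final clause $(D/p)=+1$ really requires $p\nmid D$ is well taken (the Kronecker symbol is $0$ when $p\mid D$), and this hypothesis is satisfied in all of the paper's applications of the lemma.
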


\section{Dehn filling on the figure eight knot complement and its sister}

In this section we study the closed hyperbolic manifolds obtained by  Dehn filling on the figure eight knot complement and its sister,  denoted $m004$ and $m003$ respectively in SnapPea notation. We will use the ideas described in the previous section to prove the following.

\begin{thm} \label{closed_unique}
 There is an infinite sequence of hyperbolic Dehn fillings $M(a_i,b_i)$
on the figure eight knot complement $M=m004$ with $a_i^2+12b_i^2 \to  \infty$
such that if $N$ is any orientable hyperbolic 3-manifold with 
$\vol(N)=\vol(M(a_i,b_i))$ then $N$ is homeomorphic to $M(a_i,b_i)$.
So the manifolds $M(a_i,b_i)$ are determined by their volumes, amongst all 
finite volume orientable hyperbolic 3-manifolds.
\end{thm}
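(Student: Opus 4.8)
The plan is to apply the machinery of Proposition \ref{few_mflds_with_vol} together with Lemma \ref{discr_quad_res} to the figure eight knot complement $M = m004$, and to handle the two-fold ambiguity that arises because $m004$ and its sister $m003$ share the same set of volumes of Dehn fillings. First I would compute the quadratic form $Q(a,b) = L(a,b)^2/A$ for the maximal cusp of $m004$. Since $m004$ is arithmetic with cusp field $\Q(\sqrt{-3})$, the cusp shape parameter $\tau$ lies in an imaginary quadratic field, so by the discussion preceding Lemma \ref{discr_quad_res} the form $Q$ is (a rational multiple of) an integral quadratic form. The hexagonal symmetry of the figure eight cusp should make $Q$ equivalent to $\lambda(a^2 + ab + b^2)$ for a suitable scaling constant $\lambda$; after a change of basis, this matches the form $a^2 + 12b^2$ appearing in the statement, which has discriminant $D = -48$. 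I would pin down $\lambda$ and the exact integral form by an explicit SnapPy/Snap computation of the cusp shape.

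Next I would construct the sequence $(a_i, b_i)$ so that the two hypotheses of the Key Idea hold: that $q_0 = Q(a_i,b_i)$ has a large two-sided gap in $S_Q$, and that $Q(a,b) = q_0$ has few primitive solutions. This is where Lemma \ref{discr_quad_res} does the real work. The idea is to choose $q_0$ to be (a $\lambda$-multiple of) a prime $p$ for which $-48$ is \emph{not} a quadratic residue modulo $p$, or more precisely to arrange that the value $q_0$ represented by $Q$ has all its would-be nearby competing values blocked: any $s \in S_Q$ with $0 < |s - q_0| < g$ would correspond to an integer primitively represented by the form, and Lemma \ref{discr_quad_res} forces every prime divisor of that integer to satisfy the Kronecker condition $(D/p) = +1$. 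By selecting $(a_i,b_i)$ so that the integers filling the gap interval around $q_0$ all contain a prime factor violating this condition, I can exclude them from $S_Q$ and thereby produce the gap $g > 2C_2$ required by Proposition \ref{few_mflds_with_vol}. Running this over an infinite family of primes $p_i$ gives infinitely many such $(a_i,b_i)$ with $a_i^2 + 12 b_i^2 \to \infty$. Simultaneously I would count the primitive representations of $q_0$: using the full symmetry group $G_M$ of the hexagonal cusp (which includes the order-6 rotation and reflections) and Remark \ref{few_with_symm}, I can hope to reduce the number of distinct fillings down to exactly one.

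The remaining, and genuinely delicate, step is to rule out the sister manifold $m003$. Proposition \ref{few_mflds_with_vol} only shows that among fillings of $m004$ itself there is a unique manifold of the given volume; but an orientable hyperbolic $N$ with $\vol(N) = \vol(M(a_i,b_i))$ could a priori be a filling $m003(c,d)$, since fillings on $m003$ and $m004$ can produce coincident volumes (indeed, the two cusped manifolds have the same volume $v_\omega$). To handle this I would run the entire gap-and-representation argument a second time for $m003$, whose cusp also lives in $\Q(\sqrt{-3})$ and carries an integral quadratic form $Q'$ of the same discriminant. The goal is to choose the sequence $q_0$ so that it simultaneously (a) is uniquely represented up to symmetry by $Q$, and (b) is \emph{not} represented at all by $Q'$ within the relevant window, or else that the corresponding filling $m003(c,d)$ is actually homeomorphic to $M(a_i,b_i)$. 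Deciding which of these occurs requires comparing the scaling constants and discriminants of $Q$ and $Q'$; if the represented sets $S_Q$ and $S_{Q'}$ can be made disjoint near $q_0$ by a further congruence condition on $p_i$, the sister is eliminated. Finally, I would invoke the global finiteness from J\o rgensen--Thurston to conclude that no other cusped or closed hyperbolic manifold shares the volume: all but finitely many such manifolds arise as Dehn fillings on $m003$ or $m004$ (or on lower-volume cusped manifolds), and for those finitely many exceptions one checks volumes directly.

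\smallskip
I expect the main obstacle to be the joint control of the two forms $Q$ and $Q'$ for $m004$ and $m003$: I must find an infinite arithmetic progression of primes $p_i$ for which a single number-theoretic condition simultaneously forces a large gap in $S_Q$, a single primitive representation modulo the symmetries $G_M$, and the exclusion (or identification) of the sister's competing fillings. Balancing the quadratic-residue constraints on $D = -48$ against the distinct scalings of the two cusps is the technical heart of the argument, and getting the explicit constant $C_2$ small enough that a realizable gap $g > 2C_2$ exists is what ultimately makes the construction succeed.
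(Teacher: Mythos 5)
Your proposal follows essentially the same route as the paper: normalise the cusp forms of $m004$ and $m003$ to the integral forms $Q_1(a,b)=a^2+12b^2$ and $Q_2(a,b)=4(a^2+ab+b^2)$, use Lemma \ref{discr_quad_res} to manufacture a two-sided gap in the value set by forcing every integer in the window around $q_0$ to carry a prime factor $q\equiv 5\bmod 6$ (the paper implements this with the Chinese Remainder Theorem plus Dirichlet's theorem on primes in arithmetic progressions), count primitive representations, exclude the sister, and finish with Cao--Meyerhoff and J{\o}rgensen--Thurston.

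One ingredient of your plan is wrong as stated, though it is repairable. The maximal cusp of $m004$ is \emph{not} hexagonal: its shape is $\tau=-2\sqrt{3}\,i$, a rectangular lattice sitting inside the hexagonal lattice $\Z+\Z\omega$ only as an index-$4$ sublattice, so the integral automorphism group of $Q_1=a^2+12b^2$ is just $D_2$, generated by $(a,b)\mapsto(\pm a,\pm b)$; there is no order-$6$ rotation available, and $G_{m004}=D_2$ has order $4$. If $q_0$ had twelve primitive representations you would be left with three distinct fillings, not one. What saves the argument is Euler's theorem on primes of the form $x^2+3y^2$: a prime $p\equiv 1\bmod{12}$ is represented by $a^2+12b^2$ in exactly one way up to the four sign choices, and those four lie in a single $G_{m004}$-orbit, so the count is one without any appeal to hexagonal symmetry. (The exclusion of $m003$ is also easier than you anticipate: $Q_2=4Q_0$ takes only values divisible by $4$, so an odd prime is never represented and no extra congruence condition is needed.)
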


For $m003$ and $m004$, the quadratic forms defined in  equation (\ref{eq1}) are closely related to the quadratic form
\begin{equation}  \label{eq10}Q_0(a,b) = a^2+ab+b^2 = |a+b \omega|^2
 \end{equation} 
where $\omega = {1\over 2}(1-\sqrt{3} i )$ is a cube root of $-1$ satisfying
$\omega^2-\omega+1=0$ and $\bar \omega = 1-\omega$.
 From the symmetries of the lattice $\Z+\Z\omega$, we have
$$|a+b \omega| = | \omega^k(a+b \omega)| = | \omega^k(a+ b \bar \omega)| = | \omega^k(a+b-b \omega)|$$
for $k =0, 1, \ldots, 5$. Hence $Q_0$ has symmetry group $G_{Q_0} \cong D_6$ of order 12, and
the orbit of $(a,b)$ under $G_{Q_0}$ is
\begin{equation}  \label{eq11}\{ \pm(a,b), \pm(-b,a+b), \pm(-a-b,a), \pm (a+b,-b),\pm(b,a), \pm(-a,a+b) \}.
 \end{equation}

\medskip
Now consider a maximal horospherical cusp torus for $m004$ and $m003$.
In each case we choose the geometric basis for the peripheral homology used by SnapPea,
consisting of two shortest simple closed geodesics.
From Snap or SnapPy (or \cite{weeks_phd}) we have the following:

\smallskip
For $m004$, this torus has shortest closed geodesic of length 1,
cusp shape $\tau = -2 \sqrt{3}i = 4 \omega -2$ and area $2 \sqrt{3}$. 
Hence, the $(a,b)$ geodesic has length squared
\begin{equation}  \label{eq12}Q_1(a,b)= |a + b \tau|^2 = a^2 + 12 b^2 =Q_0(a-2b,4b) .
 \end{equation}
This quadratic form has symmetry group
$G_{Q_1} = D_2$, given by $(a,b) \mapsto (\pm a, \pm b)$,
and these all extend to symmetries of $m004$, so $G_{m004} = G_{Q_1}$.

\smallskip
For $m003$, the torus has shortest closed geodesic of length 2,
cusp shape $\tau = \omega$ and area $2 \sqrt{3}$. 
Hence, the $(a,b)$ geodesic has length squared
\begin{equation}  \label{eq13}Q_2(a,b)= |2a + 2b \omega|^2 = 4(a^2+ab+b^2) = 4Q_0(a,b) .
 \end{equation}
This quadratic form has symmetry group $G_{Q_2}=G_{Q_0} = D_6$
as described above. However, $G_{m003} = D_2$ is the subgroup
given by $(a,b) \mapsto \pm(a,b), \pm(a+b,-b)$. (Note that
any symmetry of $m003$ must preserve the homologically trivial longitude
$(-1,2)$, hence must also preserve the orthogonal $(1,0)$ curve.)

\newpage
\begin{prop}  \label{p_with_big_gap}
Given any integer $g >0$ there exist infinitely many primes $p$ such that
\begin{enumerate}
\item[(i)] 
$p$ can be written as $Q_1(a,b)$ where $a,b$ are relatively prime integers,
and if $p=Q_1(a,b)=Q_1(a',b')$ then $(a',b') = (\pm a, \pm b)$ (with 4 possible choices of signs),
\item[(ii)] $p+k$ has no primitive integer representation as $Q_0(a,b)$ for $0<|k|\le g$,
\item[(iii)] $p$ has no primitive integer representation as $Q_2(a,b)$.
\end{enumerate}
\end{prop}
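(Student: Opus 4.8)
The plan is to build the primes $p$ so that each of the three conditions reduces to a congruence condition on $p$, and then invoke Dirichlet's theorem on primes in arithmetic progressions to produce infinitely many such $p$. The unifying principle is Lemma \ref{discr_quad_res}: whether $m$ has a primitive representation by an integral quadratic form is controlled by the Kronecker symbol $(D/p)$ for the prime divisors of $m$, where $D$ is the discriminant. The discriminant of $Q_0(a,b)=a^2+ab+b^2$ is $D=-3$, that of $Q_1(a,b)=a^2+12b^2$ is $D=-48$, and $Q_2=4Q_0$ also has underlying discriminant $-3$. So all three conditions will be governed by the behavior of $p$ modulo small numbers ($3$, $4$, $12$, and $4g$), which is exactly the setting where Dirichlet applies.

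First I would handle condition (i). Since $Q_1$ has discriminant $-48$ and is a form in the principal genus, a prime $p$ is primitively represented by $Q_1$ precisely when $p$ splits appropriately in $\Q(\sqrt{-3})$ together with a ramification/splitting condition at $2$; the cleanest route is to pin down the congruence class of $p$ modulo $12$ (say $p\equiv 1 \pmod{12}$) that forces representability by $Q_1$ and, crucially, by \emph{no other} reduced form of that discriminant, so that the only primitive representations are $(\pm a,\pm b)$ as required. For condition (iii), I want $p$ to have \emph{no} primitive representation as $Q_0$ (equivalently $Q_2$); by Lemma \ref{discr_quad_res} it suffices that $(-3/p)=-1$, i.e. $p\equiv 2\pmod 3$. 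One must check this is compatible with (i); if the class forced in (i) makes $(-3/p)=+1$, then I would instead arrange the genus-theoretic conditions so that $p$ is represented by $Q_1$ but the $\Q(\sqrt{-3})$-form $Q_0$ fails to represent $p$ for a $2$-adic reason, keeping (i) and (iii) consistent.

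Condition (ii) is where the real work lies, and I expect it to be the main obstacle. I need $p+k$ to have no primitive representation by $Q_0$ for every $k$ with $0<|k|\le g$ simultaneously. By Lemma \ref{discr_quad_res}, a sufficient condition for $p+k$ to fail to be primitively represented is that $p+k$ has a prime divisor $q$ with $(-3/q)=-1$ dividing $p+k$ to an odd power, or more simply that $p+k\equiv 0\pmod{q}$ for some fixed prime $q\equiv 2\pmod 3$ while $q^2\nmid p+k$. The strategy is to choose, for each $k$ in the range $0<|k|\le g$, a distinct small prime $q_k\equiv 2\pmod 3$ and impose $p\equiv -k\pmod{q_k}$; then $q_k \mid p+k$ and, arranging $q_k^2\nmid p+k$, each $p+k$ acquires a prime factor with Legendre symbol $-1$ to an odd power, so $Q_0$ cannot primitively represent it. Collecting the congruence for (i)/(iii) together with the $2g$ congruences $p\equiv -k \pmod{q_k}$ into a single modulus via the Chinese Remainder Theorem, and checking the combined residue class is coprime to the modulus, Dirichlet's theorem yields infinitely many primes $p$ in that class. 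The delicate points are ensuring the auxiliary primes $q_k$ can be chosen distinct and disjoint from the modulus used in (i)/(iii), and controlling the odd-power divisibility so that the obstruction via Lemma \ref{discr_quad_res} genuinely applies; this is elementary but requires care in bookkeeping the simultaneous congruences.
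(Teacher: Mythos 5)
Your proposal is correct and follows essentially the same route as the paper: force $p\equiv 1 \pmod{12}$ so that $p=a^2+12b^2$ has a primitive representation unique up to the four signs, kill each $p+k$ for $0<|k|\le g$ by imposing $p\equiv -k \pmod{q_k}$ for distinct auxiliary primes $q_k\equiv 2\pmod 3$ and combining everything with the Chinese Remainder Theorem and Dirichlet's theorem, and dispose of $Q_2=4Q_0$ by the $2$-adic observation that it takes only even values while $p$ is odd. Two small remarks: your first suggestion for (iii), requiring $(-3/p)=-1$, is indeed incompatible with $p\equiv 1\pmod{12}$, so the parity fallback you mention is the argument actually needed (and is what the paper uses); and the odd-power bookkeeping you worry about in (ii) is unnecessary, since Lemma \ref{discr_quad_res} already shows that mere divisibility of $p+k$ by a single prime $q_k$ with $(-3/q_k)=-1$ rules out any \emph{primitive} representation by $Q_0$, so the congruence $p\equiv -k\pmod{q_k}$ suffices on its own.
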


This follows from some elementary number theory.

\begin{lem} \label{p=1mod12} If $p$ is a prime congruent to $1$ modulo $12$, 
then $p$ can be written uniquely in the form
     $p= Q_1(a,b) = a^2 + 12 b^2$
where $a,b$ are relatively prime positive integers.
\end{lem}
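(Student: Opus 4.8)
The plan is to separate existence from uniqueness, and to reduce the form $a^{2}+12b^{2}$ to the classical form $x^{2}+3y^{2}$, whose representation theory is completely understood.

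For existence, I would first use only the congruence $p\equiv 1\pmod 3$. This gives $\left(\tfrac{-3}{p}\right)=+1$, and since the discriminant $-12$ has class number one (the unique reduced form being $x^{2}+3y^{2}$), the classical theory of representations of primes by binary quadratic forms (see \cite{cox}) shows that $p=x^{2}+3y^{2}$ for some integers $x,y$; equivalently one factors $p$ in the Eisenstein integers $\Z[\omega]$, which form a PID. This is the only genuinely non-elementary input. Then I would bring in the second congruence $p\equiv 1\pmod 4$: reducing $p=x^{2}+3y^{2}$ modulo $4$ and noting $x^{2}\in\{0,1\}$ and $3y^{2}\in\{0,3\}$, the only way to obtain $p\equiv 1$ is $x$ odd and $y$ even. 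Writing $y=2b$ and $a=x$ yields $p=a^{2}+12b^{2}$, and after adjusting signs we may take $a,b>0$. Coprimality is automatic: any common divisor $d$ of $a,b$ satisfies $d^{2}\mid p$, forcing $d=1$ since $p$ is prime; in particular $a,b\ge 1$.

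For uniqueness, suppose $p=a^{2}+12b^{2}=c^{2}+12d^{2}$ with $a,b,c,d$ positive integers. I would apply the Brahmagupta composition identity
\begin{equation*}
(a^{2}+12b^{2})(c^{2}+12d^{2})=(ac+12bd)^{2}+12(ad-bc)^{2}=(ac-12bd)^{2}+12(ad+bc)^{2},
\end{equation*}
so that $p^{2}$ has both of these representations. Since $a^{2}\equiv-12b^{2}$ and $c^{2}\equiv-12d^{2}\pmod p$, a direct computation gives $p\mid (ad-bc)(ad+bc)$. From the second expression, $12(ad+bc)^{2}\le p^{2}$ forces $0<ad+bc<p$, so the prime $p$ cannot divide $ad+bc$; hence $p\mid ad-bc$. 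But the first expression gives $12(ad-bc)^{2}\le p^{2}$, so $|ad-bc|<p$, and therefore $ad=bc$. Combined with $\gcd(a,b)=\gcd(c,d)=1$ and positivity (so $a\mid c$ and $c\mid a$), this yields $a=c$ and $b=d$.

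I expect the step requiring the most care to be the uniqueness argument, where one must deploy the two factorizations of $p^{2}$ in the correct order — the second identity to rule out $p\mid ad+bc$, the first to conclude $ad=bc$ — and then use primitivity to finish; everything else is the classical representation theorem for $x^{2}+3y^{2}$ together with short congruence computations.
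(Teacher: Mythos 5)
Your proof is correct. The existence half is essentially the paper's argument: both reduce to the classical fact that a prime $p\equiv 1\pmod 3$ is represented by $x^{2}+3y^{2}$ (the unique reduced form of discriminant $-12$), and then use the congruence $p\equiv 1\pmod 4$ to force $y$ even, hence $p=a^{2}+12b^{2}$; your mod-$4$ case check and the observation that primality of $p$ forces $\gcd(a,b)=1$ and $b\ge 1$ are both fine. Where you diverge is uniqueness: the paper simply quotes Euler's theorem that the representation $p=x^{2}+3y^{2}$ with $x,y>0$ is \emph{unique} and lets the uniqueness of $a^{2}+12b^{2}$ fall out of that, whereas you prove uniqueness directly for the form $a^{2}+12b^{2}$ via the two Brahmagupta composition identities for $p^{2}$, the size bounds $12(ad\pm bc)^{2}\le p^{2}$, and primitivity. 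Your descent-style argument checks out (the identities are correct, $0<ad+bc<p$ rules out $p\mid ad+bc$, so $p\mid ad-bc$ forces $ad=bc$, and coprimality gives $a=c$, $b=d$); it has the virtue of being self-contained and elementary, at the cost of being longer than the paper's one-line citation. Either route is acceptable.
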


\begin{proof}
Euler showed that each prime $p \equiv 1 \bmod 3$ can be written uniquely in the form
$p = x^2 + 3 y^2$ with $x,y$ positive integers (see for example \cite[Chapter 1]{cox}).
If $p \equiv 1 \bmod 12$, then taking congruences $\bmod 4$ shows that $y$ is even.
\end{proof}

\begin{lem} \label{p=5mod6} If $p$ is a prime congruent to  $5$ modulo $6$ and $p$ divides $n$,
then $n$ has no representation
$n = Q_0(a,b)=a^2 + ab + b^2$ with $a,b$ relatively prime integers.
\end{lem}

\begin{proof} Let $p$ be an odd prime dividing $n = a^2 + ab + b^2$
with $a,b$ relatively prime integers.
Then by Lemma \ref{discr_quad_res},
$-3$ is a quadratic residue mod $p$. If $p \ne 3$, it follows that
 $p \equiv 1 \bmod 6$ by Quadratic Reciprocity 
(see e.g. \cite[Thm 96]{HaWr}). 
\end{proof}

\begin{proof}[Proof of Proposition \ref{p_with_big_gap}]
We use the previous lemmas together with the Chinese remainder theorem and Dirichlet's theorem on primes in arithmetic progressions. 

For $i=0,1,2$ let  
    $$S_i = \{ Q_i(a,b) : a,b \text{ are relatively prime integers} \}.$$
    
Given any integer $g>0$,
let $p_1, p_2, \ldots, p_{2g}$ be distinct primes congruent to $5$ modulo $6$.
Consider the integers $n$ satisfying the congruences
\begin{eqnarray*}
 n-i &\equiv& 0 \bmod p_i   \qquad\text{ for }1 \le i \le g,\\
  n+i &\equiv& 0 \bmod p_{g+i} \quad\text{ for }1 \le i \le g, \text{ and }\\
 n &\equiv& 1 \bmod 12.
 \end{eqnarray*}
By the Chinese Remainder Theorem, these have a general solution of the form
\begin{equation}  \label{eq14}
n \equiv n_0 \bmod 12 p_1 \ldots p_{2g}. 
 \end{equation}
By Dirichlet's theorem on primes in arithmetic progressions, there are infinitely
many solutions to (\ref{eq14}) such that $n$ is a prime, say $p$. For such $p$ we have
$p \equiv 1 \bmod 12$ so $p \in S_1$ by Lemma \ref{p=1mod12}. Further
$p+i$ is divisible by $p_i$ and $p-i$ is divisible by $p_{g+i}$ for $1 \le i \le g$.
Hence $p+i \notin S_0, p-i \notin S_0$ for $1\le i \le g$ by Lemma \ref{p=5mod6}.
Finally, $p \notin Q_2$ since $p$ is odd. Hence $p$ satisfies all the conditions (i)--(iii)
of the proposition.
\end{proof}

We now combine Proposition \ref{few_with_symm} and Proposition \ref{p_with_big_gap} 
to prove the main theorem of this section. 

\begin{proof}[Proof of Theorem \ref{closed_unique}]

The quadratic forms $Q_1$ and $Q_2$ for $m004$ and $m003$ considered  above
give the squares of geodesic lengths on a cusp torus with area $2 \sqrt{3}$. Hence
the corresponding normalised quadratic forms, used in Proposition \ref{error_bound}, are
$$\hat Q_1(a,b) =  {1 \over 2 \sqrt{3}} Q_1(a,b) \text{ and } \hat Q_2(a,b) =  {1 \over 2 \sqrt{3}} Q_2 (a,b).$$

We now apply Proposition \ref{p_with_big_gap} with gap size $g = 4 \sqrt{3} C$, where $C$ is the maximum of the 
constants $C_2$ for $m004$ and $m003$ given by Proposition \ref{error_bound}.  This gives 
a sequence of primes $p_i \equiv1 \bmod 12$ with $p_i \to \infty$ and pairs $(a_i,b_i)$ of relatively prime integers 
with $p_i =a_i^2+12b_i^2$ satisfying the conditions of Proposition \ref{p_with_big_gap}.

\smallskip
Then $\vol (m004(a,b)) = \vol (m004(a_i,b_i))$ implies that
 $Q_1(a,b)=Q_1(a_i,b_i)$ by Proposition \ref{few_with_symm},
 hence $(a,b) = (\pm a_i, \pm b_i)$ by condition (i) of Proposition \ref{p_with_big_gap}. 
 But these solutions differ by symmetries of $Q_1$
 that extend to symmetries of $m004$, so
 $m004(a,b)$ is homeomorphic to $m004(a_i,b_i)$.

\smallskip
Further, $\vol (m004(a_i,b_i)) = \vol (m003(a,b))$ implies that
$Q_1(a_i,b_i)=Q_2(a,b)$. But this has no solutions by condition (iii) of
Proposition \ref{p_with_big_gap}.
Hence $m004(a_i,b_i)$ is uniquely
determined by its volume amongst Dehn fillings on $m004$ and $m003$.

\smallskip
Cao and Meyerhoff \cite{CM} showed that the figure eight knot complement $m004$ and its sister $m003$
are the unique orientable cusped hyperbolic 3-manifolds of smallest volume $v_\omega$.
 The work of J{\o}rgensen and Thurston (see \cite{Th}, \cite{Gr}) shows that 
any sequence of closed hyperbolic 3-manifolds with volume approaching $v_\omega$ from below has a 
subsequence whose geometric limit is a cusped hyperbolic 3-manifold $M_\infty$ with volume $v_\omega$, 
and that the manifolds in the subsequence are obtained by Dehn filling on $M_\infty$.
It follows that there exists $\epsilon >0$ such that if $N$ is any closed orientable hyperbolic 3-manifold with 
$v_\omega - \epsilon < \vol(N) <v_\omega$ then $N$ can be obtained by Dehn filling on $m003$ or $m004$.
Now the result follows from the previous observations.
\end{proof}

\section{Dehn filling on the Whitehead link complement and its sister}

We now study  the 1-cusped hyperbolic 3-manifolds obtained 
by  Dehn filling on the Whitehead link complement and its sister,  
the complement of the $(-2,3,8)$-pretzel link, denoted 
$m129$ and $m125$ respectively in SnapPea notation.
Note that each of these manifolds has a symmetry interchanging the cusps, so it suffices
to consider Dehn fillings on cusp 0.
See  also \cite{HMW,NR} and \cite{AD} for detailed discussions
of the geometry and topology of these manifolds and their Dehn fillings.

\medskip
The main result of this section is the following:

\begin{thm}  \label{cusped_unique}
There is an infinite sequence of hyperbolic Dehn fillings $M(a_i,b_i)$
on one cusp of the $(-2,3,8)$-pretzel link complement $M=m125$ with $a_i^2+b_i^2 \to  \infty$
such that if $N$ is any orientable cusped hyperbolic 3-manifold with 
$\vol(N)=\vol(M(a_i,b_i))$ then $N$ is homeomorphic to $M(a_i,b_i)$.
So the manifolds $M(a_i,b_i)$ are determined by their volumes, amongst all 
orientable cusped hyperbolic 3-manifolds.
\end{thm}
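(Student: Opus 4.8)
The plan is to mirror the strategy used for Theorem \ref{closed_unique}, applying the ``Key Idea'' of section \ref{cusped_hyp} to the pair of manifolds $m125$ and $m129$ and combining Proposition \ref{few_with_symm} with a number-theoretic gap argument analogous to Proposition \ref{p_with_big_gap}. First I would extract, from Snap or SnapPy, the cusp shape parameter $\tau$ and area $A$ for a maximal horospherical torus on one cusp of each of $m125$ and $m129$, and write down the associated extremal-length quadratic forms $Q_1$ (for $m125$) and $Q_2$ (for $m129$) via equation (\ref{eq3}). Since the statement asserts $a_i^2 + b_i^2 \to \infty$, I expect $Q_1$ to be (a rescaling of) the form $a^2 + b^2$, so that the relevant cusp shape is $\tau = \pm i$ and the symmetry group $G_{Q_1}$ will be the dihedral group of the square lattice. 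I would then determine which of these symmetries extend to isometries of $m125$ to compute $G_{m125}$, exactly as was done for $m004$ and $m003$.

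The core analytic input is Proposition \ref{error_bound}: the volume of a Dehn filling determines the value of the normalized quadratic form up to an additive error bounded by a constant $C_2$ depending only on $M$. So I would fix $C$ to be the maximum of the constants $C_2$ for $m125$ and $m129$, set a gap size $g > 2C$ (after rescaling by the cusp areas), and seek primes $p$ representable as $Q_1(a,b)$ with the three properties of Proposition \ref{p_with_big_gap}: (i) $p$ has an essentially unique primitive representation by $Q_1$ (unique up to $G_{m125}$); (ii) the integers $p+k$ for $0 < |k| \le g$ have no primitive representation by $Q_0$ (the underlying reduced form for $m129$); and (iii) $p$ itself has no primitive representation by the form $Q_2$ for $m129$. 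The discriminant criterion of Lemma \ref{discr_quad_res}, together with Quadratic Reciprocity, translates each of these nonrepresentability conditions into congruence conditions on the prime divisors of the relevant integers; Dirichlet's theorem and the Chinese Remainder Theorem then produce infinitely many suitable primes $p_i \to \infty$, giving pairs $(a_i,b_i)$ with $a_i^2 + b_i^2 = p_i \to \infty$. Combining (i)--(iii) with Proposition \ref{few_with_symm} shows that no Dehn filling on $m125$ or $m129$ other than $M(a_i,b_i)$ itself (up to symmetry) can share its volume.

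To upgrade ``unique among Dehn fillings on $m125$ and $m129$'' to ``unique among \emph{all} orientable cusped hyperbolic 3-manifolds,'' I would invoke Agol's result \cite{agol_2cusp} that $m125$ and $m129$ are the only orientable 2-cusped hyperbolic 3-manifolds of minimal volume $v_{\omega^2}$, together with Jorgensen--Thurston geometric convergence: any sequence of \emph{1-cusped} orientable hyperbolic 3-manifolds with volumes approaching $v_{\omega^2}$ from below must, after passing to a subsequence, converge geometrically to a 2-cusped manifold of volume $v_{\omega^2}$, and thus eventually be obtained by Dehn filling one cusp of $m125$ or $m129$. This would pin down an $\epsilon > 0$ so that every 1-cusped $N$ with $v_{\omega^2} - \epsilon < \vol(N) < v_{\omega^2}$ is such a filling, and the volume values $\vol(M(a_i,b_i))$ can be taken in this window since they converge to $v_{\omega^2}$ from below.

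The main obstacle, flagged in the introduction as the reason the proof ``requires us to study higher order terms in the Neumann--Zagier asymptotic formula,'' is condition (iii) and the comparison between fillings of $m125$ and $m129$. Because the target manifolds are themselves \emph{cusped} (one cusp is filled, one remains), the first-order volume estimate of equation (\ref{eq5}) may not separate the volumes of near-degenerate competing fillings sharply enough: if the leading extremal-length terms for $m125$ and $m129$ happen to coincide or nearly coincide for certain $(a,b)$, one must appeal to the next term in the Neumann--Zagier expansion to break the tie. I therefore expect the hard part to be controlling these higher-order corrections (computed in section 5 of the paper) well enough that the additive error in Proposition \ref{error_bound} genuinely stays below the gap $g$ for \emph{all} hyperbolic fillings, not merely asymptotically; once that is secured, the number theory of the forms $Q_1$, $Q_2$, $Q_0$ and the geometric-limit argument proceed just as in the closed case.
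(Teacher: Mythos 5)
Your overall skeleton matches the paper's: the forms $Q_1 = 2(a^2+b^2)$ for $m125$ and $Q_2 = a^2+4b^2$ for $m129$, a gap-prime construction via Lemma \ref{discr_quad_res}, CRT and Dirichlet (the paper's Proposition \ref{2p_with_big_gap}, using $m=2p$ with $p\equiv 1 \bmod 4$), and the reduction to Dehn fillings on $m125$ and $m129$ via Agol's theorem and J{\o}rgensen--Thurston. However, there is a genuine gap at your condition (i). You ask for primes whose primitive representation by $Q_1$ is unique up to $G_{m125}$, but this is unachievable: $G_{Q_1}\cong D_4$ has order $8$, while only the orientation-preserving subgroup $G_{m125}\cong C_4$ of order $4$ extends to isometries of $m125$, so every prime $p\equiv 1\bmod 4$ with $p=a^2+b^2$ yields exactly \emph{two} $C_4$-orbits of primitive representations, those of $(a,b)$ and of $(b,a)$, interchanged by the reflection that does not extend. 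Proposition \ref{few_with_symm} therefore only bounds the count by $n(q_0)/|G_{m125}| = 2$, and the extremal-length/gap machinery alone cannot rule out that $m125(b_i,a_i)$ is a second, non-homeomorphic manifold of the same volume, since $Q_1(a,b)=Q_1(b,a)$ makes the leading Neumann--Zagier term blind to the difference.

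This is exactly where the higher-order Neumann--Zagier terms enter, and you have misattributed their role: the comparison with $m129$ needs no asymptotic refinement at all, being settled by the elementary observation that $m\equiv 2\bmod 4$ has no representation as $a^2+4b^2$. What is actually needed is the degree-$4$ coefficient $c_3=(-3+i)/48$ for cusp $0$ of $m125$ (from Aaber--Dunfield), which gives
\begin{equation*}
\Delta V_{m125}(a,b)-\Delta V_{m125}(b,a)=\frac{\pi^4\,ab\,(a^2-b^2)}{(a^2+b^2)^4}+O\!\left(\frac{1}{(a^2+b^2)^3}\right),
\end{equation*}
together with the elementary lower bound $ab(a^2-b^2)\ge (a^2+b^2)^{3/2}/4$ for integers $a>b>0$ (Lemma \ref{lower_bound}), to conclude that $\vol(m125(a,b))\ne\vol(m125(b,a))$ once $a^2+b^2$ is large. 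Without this step your argument proves only that at most two manifolds share the volume $\vol(M(a_i,b_i))$, not one. Your secondary worry, that the first-order estimate degrades because the filled manifolds are still cusped, is not an issue: Proposition \ref{error_bound} applies verbatim to filling one cusp of a multi-cusped manifold.
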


The proof is similar to the proof of Theorem \ref{closed_unique}. However there
is an extra difficulty because not every symmetry of the cusp torus extends to a symmetry
of the manifold $m125$. To deal with this, we look at the next terms in the Neumann-Zagier asymptotic 
expansion for  volume change during Dehn filling. 

\smallskip
The quadratic forms for $m125$ and $m129$ given by (\ref{eq1}) 
are closely related to the quadratic form
\begin{equation}  \label{eq15}
Q_0(a,b) = a^2+ b^2 = |a+b i|^2 .
\end{equation} 
From the symmetries of the lattice $\Z+\Z i$, we have
$$|a+b i | = | i^k(a+b i)| = | i^k(a- b i)| $$
for $k =0, 1, \ldots, 3$,
and $Q_0$ has symmetry group $G_{Q_0} \cong D_4$ of order 8, with the orbit
of $(a,b)$ under $G_{Q_0}$ given by
\begin{equation}  \label{eq16}
\{  \pm(a, b), \pm(-b,a), \pm(a,-b), \pm(b,a) \}.
\end{equation} 

Consider a maximal horospherical cusp torus for $m129$ and $m125$.
In each case we choose the geometric basis for peripheral homology used by SnapPea.
From SnapPea (or \cite{HMW, NR, AD}) we have the following:

\smallskip
For $m125$, the torus has cusp shape $\tau = i $. If we normalise so that
the torus has area $2$ then the $(a,b)$ geodesic has length squared
\begin{equation}  \label{eq17}
Q_1(a,b)= 2 |a + b i |^2 = 2(a^2 + b^2) = 2 Q_0(a,b). 
\end{equation} 
This quadratic form has symmetry group $G_{Q_1}=G_{Q_0} = D_4$
as described above. 
The subgroup of symmetries of $Q_1$ extending to symmetries of $m125$
is the subgroup of orientation preserving symmetries $G_{m125} = C_4$, given by 
$(a,b) \mapsto  \pm(a, b), \pm(-b,a)$.

\smallskip
For $m129$, the torus has cusp shape $\tau = 2 i$. If we normalise so that
the torus has area $2$ then the $(a,b)$ geodesic has length squared
\begin{equation}  \label{eq18}
Q_2(a,b)={ |a+ 2bi|^2 } = (a^2+ 4b^2) = Q_0(a,2b).
\end{equation} 
This quadratic form has symmetry group $G_{Q_2} = D_2$
consisting of $(a,b) \mapsto (\pm a, \pm b)$, but $G_{m129} = C_2$ is
the subgroup $(a,b) \mapsto \pm(a,b)$.

\begin{prop}  \label{2p_with_big_gap}
For each $g >0$ there exist infinitely many integers $m=2p$, with $p$ prime,
such that
\begin{enumerate}
\item[(i)] $m$ can be written as $Q_1(a,b)$ where $a,b$ are relatively prime integers,
and if $Q_1(a,b)=Q_1(a',b')$ then $(a',b') = (\pm a, \pm b)$ or
$(a',b') = (\pm b, \pm a)$. 
\item[(ii)] $m+k$ has no primitive integer representation as $Q_0(a,b)$ for $0<|k|\le g$,
\item[(iii)] $m$ has no primitive integer representation as $Q_2(a,b)$.
\end{enumerate}
\end{prop}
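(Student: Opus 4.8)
The plan is to mimic the structure of Proposition \ref{p_with_big_gap}, building the desired integers $m=2p$ via the Chinese Remainder Theorem and Dirichlet's theorem, but now organizing everything around the form $Q_0(a,b)=a^2+b^2$ (sum of two squares) rather than $a^2+ab+b^2$. The controlling arithmetic fact is that the discriminant of $Q_0=a^2+b^2$ is $D=-4$, so by Lemma \ref{discr_quad_res} a prime $p$ dividing a primitively represented value must satisfy $(-4/p)=(-1/p)=+1$, i.e.\ $p\equiv 1 \bmod 4$. The analogue of the auxiliary ``obstruction'' Lemma \ref{p=5mod6} is therefore: if a prime $q\equiv 3 \bmod 4$ divides $n$, then $n$ has no primitive representation $n=Q_0(a,b)=a^2+b^2$. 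I would state and prove this short lemma first, exactly as in Lemma \ref{p=5mod6} but with $-4$ in place of $-3$.

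Next I would pin down the target prime $p$. Since $Q_1=2Q_0$ and $Q_2(a,b)=a^2+4b^2=Q_0(a,2b)$, I want $m=2p$ so that $p$ is a prime represented primitively by $Q_0$ with essentially unique representation. The classical fact is that a prime $p\equiv 1 \bmod 4$ has an essentially unique representation $p=a^2+b^2$, the eight solutions being $(\pm a,\pm b)$ and $(\pm b,\pm a)$; this gives condition (i), since $Q_1(a,b)=2p$ forces $a^2+b^2=p$. For condition (iii), I want $m=2p$ to have no primitive representation by $Q_2=a^2+4b^2$; here I would impose a congruence on $p$ guaranteeing a prime factor that obstructs representation by $Q_2$. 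The cleanest route is to choose $p\equiv 5 \bmod 8$: then $m=2p\equiv 10\bmod 16$, and since $Q_2(a,b)=a^2+4b^2$ is odd-$+$-even, a primitive representation forces $a$ odd, giving $a^2+4b^2\equiv 1$ or $5 \bmod 8$, which is incompatible with $2p\equiv 2\bmod 8$. Thus parity/mod-$8$ considerations alone kill condition (iii) — no extra prime factor is needed there, which simplifies matters relative to Proposition \ref{p_with_big_gap}.

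For condition (ii), I would set up the gap exactly as before: pick $2g$ distinct primes $q_1,\dots,q_{2g}\equiv 3 \bmod 4$ and require $m-k\equiv 0 \bmod q_k$ for $1\le k\le g$ and $m+k\equiv 0\bmod q_{g+k}$ for $1\le k\le g$. By the obstruction lemma each $m\pm k$ then has a prime factor $\equiv 3\bmod 4$, so $m\pm k\notin S_0$ for $0<|k|\le g$. I would simultaneously impose $m\equiv 2\bmod 4$ together with $m/2\equiv 5\bmod 8$ (equivalently $m\equiv 10\bmod 16$) to force $m=2p$ with $p\equiv 1\bmod 4$ and to secure (iii). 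Assembling all these congruences via CRT (the moduli $16$ and the $q_j$ being pairwise coprime) yields a single congruence class $m\equiv m_0 \bmod 16\,q_1\cdots q_{2g}$; Dirichlet's theorem applied to $p=m/2$ then produces infinitely many such $m=2p$ with $p$ prime. One must check the congruence for $p$ is to a modulus coprime to $p$ itself so Dirichlet applies — this is routine since the $q_j$ and the power of $2$ are fixed.

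The main obstacle I anticipate is correctly matching the symmetry bookkeeping in condition (i): because $G_{m125}=C_4$ is a proper subgroup of $G_{Q_1}=D_4$, I must allow the full eight-element solution set $(\pm a,\pm b),(\pm b,\pm a)$ in the statement (which the Proposition does), and later — in the proof of Theorem \ref{cusped_unique} — it is the extra reflection $(a,b)\mapsto(b,a)$ \emph{not} in $G_{m125}$ that forces the appeal to higher-order Neumann--Zagier terms. So the arithmetic here need only guarantee that \emph{no other} primitive representation exists, exactly the uniqueness of the sum-of-two-squares representation of a prime; the genuinely delicate work of distinguishing the fillings related by the non-extending symmetry is deferred and not part of this Proposition.
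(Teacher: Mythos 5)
Your proposal is correct and follows essentially the same route as the paper, which assembles exactly the same ingredients: the obstruction lemma that a prime factor $\equiv 3 \bmod 4$ kills primitive representations by $a^2+b^2$, Euler's essentially unique two-squares representation of primes $\equiv 1 \bmod 4$ for condition (i), a congruence obstruction for $a^2+4b^2$ for condition (iii), and the Chinese Remainder Theorem plus Dirichlet for condition (ii). The only quibble is in your treatment of (iii): the detour through $p\equiv 5\bmod 8$ is unnecessary and the sub-claim that primitivity forces $a$ odd is not justified; the paper simply observes (its Lemma after Lemma \ref{p=3mod4}) that $a^2+4b^2\bmod 4$ is never $2$, so no integer $\equiv 2\bmod 4$ --- in particular $m=2p$ --- is represented by $Q_2$ at all, which covers both parities of $a$ at once.
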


This again follows from some elementary  number theory.
\begin{lem} \label{p=1mod4} If $p$ is a prime congruent to $1$ modulo $4$, 
then $p$ can be written  in the form
     $p= a^2 + b^2$
where $a,b$ are relatively prime positive integers.
Further, if $p=(a')^2+(b')^2$ where $a',b'$ are positive integers
then $(a',b')=(a,b)$ or $(b,a)$.
\end{lem}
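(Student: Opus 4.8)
The plan is to prove this via the arithmetic of the Gaussian integers $\Z[i]$, which is a Euclidean domain and hence a unique factorization domain; both existence and uniqueness then fall out of factoring $p$ in this ring.

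For existence, I would first observe that since $p \equiv 1 \bmod 4$, Euler's criterion gives $(-1)^{(p-1)/2}=1$, so $-1$ is a quadratic residue modulo $p$; choose $x$ with $x^2 \equiv -1 \bmod p$. Then in $\Z[i]$ we have $p \mid x^2+1 = (x+i)(x-i)$, yet $p$ divides neither factor (the quotients $(x \pm i)/p$ are not Gaussian integers). Hence $p$ is not a Gaussian prime, so we may write $p = \pi\mu$ with neither factor a unit. Applying the norm $N(a+bi)=a^2+b^2$ gives $p^2 = N(\pi)N(\mu)$ with both norms exceeding $1$, which forces $N(\pi)=p$. Writing $\pi = a+bi$ yields $p = a^2+b^2$. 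Coprimality and positivity are then automatic: any common prime factor $q$ of $a$ and $b$ would give $q^2 \mid p$, contradicting primality, and neither $a$ nor $b$ can vanish since $p$ is not a perfect square, so after adjusting signs we may take $a,b>0$ with $\gcd(a,b)=1$.

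For uniqueness, I would use the fact that an element of $\Z[i]$ whose norm is a rational prime is itself a Gaussian prime; thus $a+bi$ is a Gaussian prime. Given any representation $p = (a')^2+(b')^2 = (a'+b'i)(a'-b'i)$, unique factorization in $\Z[i]$ forces $a'+b'i$ to be a unit multiple of one of the two prime factors $a+bi$ or $a-bi$ of $p$. Running through the four units $\pm 1, \pm i$ and retaining only those associates whose real and imaginary parts are both positive, one finds exactly $a+bi$ and $b+ai$, giving $(a',b')=(a,b)$ or $(b,a)$.

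The individual computations are routine, and existence is just Fermat's classical two-square theorem; the only step requiring genuine care is the bookkeeping of units in the uniqueness argument, where one must check that restricting to positive $a',b'$ leaves precisely the two advertised solutions and no others.
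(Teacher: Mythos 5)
Your proof is correct. The paper itself does not argue the lemma at all: it simply attributes the statement to Euler and cites Hardy--Wright, Chapter 15, so any self-contained argument is necessarily a ``different route.'' The route you chose --- factoring $p$ in the Gaussian integers $\Z[i]$ --- is the standard modern proof and it delivers both halves of the lemma cleanly: solvability of $x^2\equiv -1 \pmod p$ shows $p$ is not irreducible in $\Z[i]$, the norm argument produces $p=a^2+b^2$ with the coprimality and positivity claims following immediately, and uniqueness up to swapping $a$ and $b$ falls out of unique factorization once you note that an element of norm $p$ is a Gaussian prime and then enumerate the unit multiples of $a\pm bi$ with both coordinates positive (exactly $a+bi$ and $b+ai$, as you say). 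This buys a complete, checkable argument at the cost of invoking the arithmetic of $\Z[i]$, whereas the paper's citation keeps the exposition short; Hardy--Wright's own treatment (via either $\Z[i]$ or descent) establishes the same two facts. The only bookkeeping worth making explicit is that $a\neq 0$ and $b\neq 0$ because a prime is not a perfect square, and that a common factor of $a'$ and $b'$ would force $q^2\mid p$, so every positive representation is automatically primitive; you have both points. No gaps.
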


\begin{proof}
This is a result of Euler 
(see, for example, \cite[Chapter 15]{HaWr}).
\end{proof}

\begin{lem} \label{p=3mod4} If  $p$ is a prime  congruent to  $3$ modulo $4$ 
and $p$ divides $n$, then $n$ has no representation
$n = Q_0(a,b)=a^2 + b^2$ with $a,b$ relatively prime integers.
\end{lem}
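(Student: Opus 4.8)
The plan is to imitate the proof of the companion Lemma \ref{p=5mod6}, which handles the analogous statement for the form $a^2+ab+b^2$: the whole argument reduces to a single application of Lemma \ref{discr_quad_res} followed by an elementary Kronecker-symbol computation. First I would record the discriminant of $Q_0$. Writing $Q_0$ in the notation of (\ref{eq9}) as $a x^2 + b xy + c y^2$ with $a=1$, $b=0$, $c=1$, its discriminant is
$$ D = b^2 - 4ac = 0 - 4 = -4. $$

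Next I would argue by contradiction. Suppose $n = a^2 + b^2$ with $\gcd(a,b)=1$, so that $n$ has a primitive integer representation by $Q_0$. Since $p$ is a prime dividing $n$, Lemma \ref{discr_quad_res} forces the Kronecker symbol $(D/p) = (-4/p)$ to equal $+1$. For an odd prime $p$ we have
$$ \left(\frac{-4}{p}\right) = \left(\frac{-1}{p}\right)\left(\frac{2}{p}\right)^2 = \left(\frac{-1}{p}\right), $$
and by the first supplement to Quadratic Reciprocity $(-1/p) = +1$ precisely when $p \equiv 1 \bmod 4$. Since by hypothesis $p \equiv 3 \bmod 4$, we instead get $(-4/p) = (-1/p) = -1$, contradicting the conclusion of Lemma \ref{discr_quad_res}.

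Hence no such primitive representation $n = a^2 + b^2$ can exist, which is exactly the claim. I expect no genuine obstacle in this argument: every ingredient is already in place. The only point to handle with a little care is the reduction of the Kronecker symbol $(-4/p)$ to the Legendre symbol $(-1/p)$ for odd $p$; this is routine, but I would state it explicitly so that the appeal to $p \equiv 3 \bmod 4$ is transparent and parallels the structure of the proof of Lemma \ref{p=5mod6} given just above.
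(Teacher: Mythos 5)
Your proof is correct and follows essentially the same route as the paper's: both apply Lemma \ref{discr_quad_res} to conclude that $-1$ (equivalently, the discriminant $-4$) is a quadratic residue mod $p$, and then invoke the first supplement to Quadratic Reciprocity to force $p \equiv 1 \bmod 4$. You merely spell out the reduction of the Kronecker symbol $(-4/p)$ to $(-1/p)$, which the paper leaves implicit.
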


\begin{proof} Let $p$ be an odd prime dividing $n = a^2 + b^2$
with $a,b$ relatively prime integers.
Then by Lemma \ref{discr_quad_res},
$-1$ is a quadratic residue mod $p$. It follows that
 $p \equiv 1 \bmod 4$ 
(see e.g. \cite[Thm 82]{HaWr}). 
\end{proof}

\begin{lem} If $n \equiv 2 \mod 4$ then
$n = Q_2(a,b) = a^2 + 4 b^2 $ has no solution with $a,b$ integers.
\end{lem}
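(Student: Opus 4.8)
The plan is to prove this by a simple parity/congruence argument modulo $4$. The key observation is that the quadratic form $Q_2(a,b) = a^2 + 4b^2$ has a severely restricted image modulo $4$: the term $4b^2$ vanishes mod $4$, so $Q_2(a,b) \equiv a^2 \pmod 4$. Since any square is congruent to $0$ or $1$ modulo $4$ (with $a^2 \equiv 0$ when $a$ is even and $a^2 \equiv 1$ when $a$ is odd), the entire form satisfies $Q_2(a,b) \equiv 0 \text{ or } 1 \pmod 4$ for all integers $a,b$.

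The main body of the argument is then immediate. First I would reduce the defining equation $n = a^2 + 4b^2$ modulo $4$, obtaining $n \equiv a^2 \pmod 4$. Next I would invoke the fact that $a^2 \bmod 4 \in \{0,1\}$, so that any integer representable as $a^2 + 4b^2$ must itself be congruent to $0$ or $1$ modulo $4$. Since the hypothesis is precisely that $n \equiv 2 \pmod 4$, and $2$ is neither $0$ nor $1$ modulo $4$, no such representation can exist. This contradicts the assumed existence of integers $a,b$ with $Q_2(a,b) = n$, so the solution set is empty.

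I do not expect any genuine obstacle here; the statement is elementary and the argument is a one-line congruence computation requiring no appeal to Lemma \ref{discr_quad_res} or quadratic reciprocity. The only point worth stating carefully is that the conclusion holds for \emph{all} integer pairs $(a,b)$, not merely primitive ones, since the hypothesis $n \equiv 2 \pmod 4$ is incompatible with representability regardless of whether $\gcd(a,b)=1$. This is in fact stronger than the primitivity-based nonrepresentability lemmas preceding it, and it is exactly the form needed to verify condition (iii) of Proposition \ref{2p_with_big_gap}: the relevant integer $m = 2p$ with $p$ an odd prime satisfies $m \equiv 2 \pmod 4$, so this lemma rules out any representation of $m$ by $Q_2$ at once.
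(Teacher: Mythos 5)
Your argument is correct and is exactly the paper's proof, whose entire content is ``Reduce modulo $4$'': since $a^2+4b^2\equiv a^2\equiv 0$ or $1\pmod 4$, no $n\equiv 2\pmod 4$ is represented. You have simply spelled out the same one-line congruence computation in full.
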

\begin{proof} Reduce modulo $4$.
\end{proof}

\begin{proof}[Proof of Proposition \ref{2p_with_big_gap}]
This follows from the previous lemmas together with the Chinese remainder theorem and Dirichlet's theorem on primes in arithmetic progressions, as in the proof of Proposition \ref{p_with_big_gap}.
\end{proof}

\begin{proof}[Proof of Theorem \ref{cusped_unique}]
The quadratic forms $Q_1$ and $Q_2$ for $m125$ and $m129$ considered  above
give the squares of geodesic lengths on a cusp torus with area $2$. Hence
the corresponding normalised quadratic forms are given by
$$\hat Q_1(a,b) =  {1 \over 2 } Q_1(a,b) \text{ and } \hat Q_2(a,b) =  {1 \over 2 } Q_2 (a,b).$$
We now apply Proposition \ref{p_with_big_gap} with gap size $g = 4 C$, where $C$ is the maximum of the 
constants $C_2$ for $m125$ and $m129$ given by Proposition \ref{error_bound}.  

This gives 
a sequence of primes $p_i \equiv 1 \bmod 4$ with $p_i \to \infty$ and pairs $(a_i,b_i)$ of relatively prime integers with
$p_i =a_i^2+b_i^2$ satisfying the conditions of Proposition \ref{2p_with_big_gap}.
Then
 $\vol(m125(a,b)) = \vol(m125(a_i,b_i)$ implies that
 $Q_1(a,b)=Q_1(a_i,b_i)$, hence   $(a,b) = \pm(a_i,b_i), \pm(-b_i,a_i), \pm(a_i,-b_i)  \text{ or} \pm(b_i,a_i)$.
 \if 0
 For any such $(a,b)=(a_i,b_i)$,
 $\vol(m125(a,b)) = \vol(m125(a',b'))$ implies that
 $Q_1(a,b)=Q_1(a',b')$ hence   $(a',b') = \pm(a, b), \pm(-b,a), \pm(a,-b)  \text{ or} \pm(b,a)$.
\fi

\smallskip
Now the surgeries $\pm(a, b), \pm(-b,a)$ give manifolds which are homeomorphic, hence have the same volume.
Similarly, the surgeries $ \pm(a,-b),   \pm(b,a)$ give homeomorphic manifolds of the same volume.
Next, we use the degree 4 terms in the Neumann-Zagier asymptotic formula
to show that $\vol(m125(a,b)) \ne  \vol (m125(b,a))$ for all sufficiently large $(a,b)$.

\smallskip
In \cite{AD}, Aaber and Dunfield study the Neumann-Zagier asymptotic expansion 
of volume for Dehn fillings on  the Whitehead link sister $m125$.
Using their calculations we obtain  the following asymptotic formula for decrease in volume
for surgeries on cusp 0 of $m125$ (see Example \ref{m125_NZ4} and equation (\ref{eq31}) below):
\begin{equation*}  
\Delta V_{m125} (a,b)=  
{ \pi^2 \over a^2+b^2} -
\frac{\pi^4(a^4-12 a^3 b-6 a^2 b^2+12 a b^3+b^4)}{ 24\left(a^2+b^2\right)^4}
+ O \left({1 \over (a^2+b^2)^3}\right).
\end{equation*} 
Hence, we have
\begin{equation}  \label{eq20}\Delta V_{m125} (a,b) - \Delta V_{m125} (b,a)
=  \frac{\pi ^4 a b \left(a^2-b^2\right)}{\left(a^2+b^2\right)^4} +
O \left({1 \over (a^2+b^2)^3}\right).
\end{equation} 

\begin{lem} \label{lower_bound}
If $a$ and $b$ are integers with $a > b > 0$,
then
$a b (a^2-b^2) \ge  {r^3/4},$
where $r^2 = a^2+b^2$.
\end{lem}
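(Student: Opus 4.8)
The plan is to clear the denominator and prove the equivalent inequality
$4ab(a^2-b^2)\ge (a^2+b^2)^{3/2}$, writing $r=\sqrt{a^2+b^2}$ so that the right-hand side is exactly $r^3$; since both sides are positive this is equivalent to the stated bound $ab(a^2-b^2)\ge r^3/4$. Before doing anything I would record the crucial warning that the \emph{continuous} version of this inequality is false: for real $a>b>0$ with $a\to b^+$ the left-hand side tends to $0$ while $(a^2+b^2)^{3/2}$ stays bounded away from $0$. (Setting $a=r\cos\theta,\ b=r\sin\theta$ gives the clean identity $ab(a^2-b^2)=\tfrac14 r^4\sin 4\theta$, so the claim is just $r\sin4\theta\ge 1$, and one sees directly that the dangerous regimes are $\theta\to 0$ and $\theta\to\pi/4$.) Consequently both integrality facts available, namely $b\ge 1$ and $a-b\ge 1$, must enter, and no single homogeneous estimate can succeed globally. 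This forces a case split, which I would take at $a=2b$.

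In the first case, $a\ge 2b$, I would use only $b\ge 1$. From $b\le a/2$ one gets $a^2-b^2\ge \tfrac34 a^2$ and $a^2+b^2\le \tfrac54 a^2$, hence
\[
4ab(a^2-b^2)\ \ge\ 4ab\cdot\tfrac34 a^2\ =\ 3a^3 b\ \ge\ 3a^3,
\qquad (a^2+b^2)^{3/2}\ \le\ (\tfrac54)^{3/2}a^3 .
\]
Since $(5/4)^{3/2}=\sqrt{125/64}<3$, the inequality follows at once.

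In the second case, $b<a<2b$, the factor $a-b$ is the one that must be exploited. From $a<2b$ I obtain $ab>\tfrac12 a^2$ and $a+b>\tfrac32 a$, while $a-b\ge 1$ gives $a^2-b^2=(a-b)(a+b)\ge a+b$. Combining these,
\[
4ab(a^2-b^2)\ >\ 2a^2(a+b)\ >\ 3a^3,
\]
whereas $b<a$ yields $(a^2+b^2)^{3/2}<(2a^2)^{3/2}=2\sqrt{2}\,a^3<3a^3$, so again $4ab(a^2-b^2)>(a^2+b^2)^{3/2}$.

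The two cases exhaust $a>b>0$, so the lemma follows. The only real obstacle is conceptual rather than computational: one must recognize that the naive homogeneous inequality is genuinely false near $a=b$, so that integrality is unavoidable, and then choose the cut $a=2b$ so that in each regime precisely one of the two constraints ($b\ge 1$ in the first, $a-b\ge 1$ in the second) does the work. Once the split is made correctly, the constants ($3$ versus $(5/4)^{3/2}$, and $3$ versus $2\sqrt2$) land comfortably on the right side and the estimates are completely elementary.
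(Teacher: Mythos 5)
Your proof is correct and follows essentially the same route as the paper: the same case split at $a=2b$, with the integrality constraint $b\ge 1$ doing the work in one regime and $a-b\ge 1$ in the other. The only difference is cosmetic --- the paper packages both cases as the single bound $b(a-b)\ge a/2$ and then finishes via $a^2\ge r^2/2$ and $a+b\ge r$, whereas you compare both sides to $3a^3$ --- so nothing further is needed.
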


\begin{proof} Consider the function 
$$f(a,b)=    a b (a^2-b^2) = a b ( a + b ) ( a - b ).$$
Now we have
\begin{enumerate}
\item[(i)]  If $b \le a/2$  then   
    $b(a-b) \ge b (a - a/2)=  b a/2  \ge a/2,$
since $b \ge 1$.
\item[(ii)] If $b  \ge a/2$ then
   $b (a-b) \ge  a/2 \cdot (a-b)  \ge  a/2,$
since $a-b  \ge 1$.
\end{enumerate}
\smallskip
Further, $a^2+b^2 = r^2$ and $ a > b > 0$ imply that
$a^2 \ge r^2/2$ and $a+b \ge r$. Hence
$$f(a,b)=  a(a+b) b(a-b) \ge a(a+b) \cdot a/2  = a^2 (a+b)/2 \ge r^3/4,$$
as desired. 
\end{proof}

Equation (\ref{eq20}) shows that,  whenever $a^2+b^2$ is sufficiently large,
$$\Delta V_{m125} (a,b) - \Delta V_{m125} (b,a)
\ge  \frac{\pi ^4 a b \left(a^2-b^2\right) - C(a^2+b^2)}{\left(a^2+b^2\right)^4},$$
where $C$ is a positive constant.
Hence, by  Lemma \ref{lower_bound}, 
there exists a constant $R>0$ such that 
if $(a,b)$ are relatively prime integers with $a >b >0$ and 
$a^2+b^2 \ge R^2$ , then 
$\Delta V_{m125} (a,b) > \Delta V_{m125} (b,a)$.

\smallskip
This completes the proof that for all sufficiently large $(a_i,b_i)$,
the Dehn fillings $m125(a_i,b_i)$ are determined by their volumes
amongst Dehn fillings on $m125$.

\smallskip
Further, $\vol(m125(a_i,b_i)) = \vol(m129(a,b))$ implies that $Q_1(a_i,b_i)=Q_2(a,b)$,
which has no solutions by condition (iii) of Proposition \ref{2p_with_big_gap}. Hence,
for all sufficiently large $(a_i,b_i)$,
the manifolds $m125(a_i,b_i)$ are determined by their volumes amongst Dehn fillings on $m129$ and $m125$.

\smallskip
The result now follows as in the proof of Theorem \ref{closed_unique} since
$m129$ and $m125$ are the unique 2-cusped manifolds of volume $v_{\omega^2}$
by the work of Agol \cite{agol_2cusp}. So any 1-cusped manifold with volume less than $v_{\omega^2}$
but sufficiently close to $v_{\omega^2}$ comes from Dehn filling on $m129$ or $m125$.
\end{proof}

\section{Higher order terms in the Neumann-Zagier asymptotic formula}\label{degree4_NZ} 

Let $M$ be a cusped hyperbolic 3-manifold with horospherical torus cross section $T$,
and chose meridian and longitude generators for $H_1(T)=\Z^2$ as in section \ref{cusped_hyp}.
Let $u,v$ denote the logarithms of the holonomies of the 
meridian and longitude respectively, chosen so that $u=v=0$ at the complete hyperbolic structure on $M$.
 Then $-v$ has a power series expansion
 \begin{equation}  \label{eq21}
 -v = \sum_{n=1}^\infty   c_n u^n ,
\end{equation} 
 where $c_n=0$ for all even $n$ and $c_1 = -\tau$ where $\tau$ is our cusp shape 
 parameter.\footnote{In \cite{NZ},  a non-standard orientation convention for meridian
and longitude was used; this was pointed out in \cite{NR}. 
Replacing the basis in \cite{NZ} by $meridian, -(longitude)$ corresponding to $u,-v$ 
gives our version of the Neumann-Zagier formula.}

Then  Neumann and Zagier \cite[equation (62), p328]{NZ}) derive the following asymptotic formula
for the decrease in volume $\Delta V$ during Dehn filling for $(p,q)$ Dehn filling
on one cusp $T$ of the cusped hyperbolic 3-manifold $M$:
\begin{equation}  \label{eq22}
\Delta V = {|\text{Im}\,(c_1)| \pi^2 \over |z|^2} -
{2 \pi^4}\,\text{Im}\, \left[  {c_3  \over z^4}\right] + O \left({1 \over |z|^6}\right),
\end{equation} 
where  $z =  p + q \tau$.

If we write $z = r e^{i \theta}$, then this becomes
\begin{equation}  \label{eq23}\Delta V = {|\text{Im}\,(\tau)| \pi^2 \over r^2} -
{2 \pi^4 \over r^4 }\text{Im}\, \left[  c_3 e^{-4 i\theta}\right] + O \left({1 \over r^6}\right).
\end{equation} 

We now give some calculations for the manifolds studied in the previous sections.

\begin{ex}\label{m004_NZ4}
{\em 
For $m004$, Neumann and Zagier (\cite[p331]{NZ}) show that
$c_1 = 2 \sqrt{-3}=-\tau$ and $c_3 = {2 \sqrt{-3} \over 3}$,
and obtain the following asymptotic formula:
\begin{equation}  \label{eq24}\Delta V_{m004}(p,q) = {2 \sqrt{3} \pi^2 \over p^2+12q^2}
-{4\sqrt{3}(p^4-72p^2q^2+144q^4)\pi^4 \over 3(p^2 + 12 q^2)^4} + O \left({1 \over (p^2 + 12 q^2)^3}\right).
\end{equation} 
Writing $z = p + \tau q = r e^{i \theta}$ as above, this simplifies to
\begin{equation}  \label{eq25}
\Delta V_{m004} = {2 \sqrt{3} \pi^2 \over r^2} -
{4 \pi^4 \cos(4\theta) \over \sqrt{3} r^4 }+ O \left({1 \over r^6}\right).
\end{equation} 
}
\end{ex}

\begin{ex} \label{m003_NZ4}
{\em 
 Let $WL$ denote the Whitehead link complement 
as drawn in Figure \ref{fig.WL_WS} (and in \cite{HMW}, \cite{NR}),
with the standard topological choice of meridians and longitudes.
Then  Hodgson-Meyerhoff-Weeks  \cite[ Figure 8 with $p=q=1$]{HMW} show that 
the hyperbolic Dehn fillings $WL(1,1)(m,l)$ and $WL(-5,1)(m,-l-m/2)$
have equal volumes since they have homeomorphic 2-fold covers,
whenever $m,l$ are relatively prime integers with $m$ a multiple of $4$.

Now $WL(1,1)$ and $WL(-5,1)$ are homeomorphic to $m004$ and $m003$
respectively. In fact, using SnapPy or Snap,  there are isometries taking
$WL(1,1)(m,l)$ to $m004(m,l)$
and $WL(-5,1)(m,-l-m/2)$ to $m003(m/2-l,2l)$.
Hence we have
\begin{equation}  \label{eq26}\vol(m004(m,l)) =  \vol(m003(a,b))  
\end{equation} 
for hyperbolic Dehn fillings such that $(a,b)=(m/2-l,2l)$ and $(m,l)=(2a+b,b/2)$ for 
pairs of relatively prime integers 
with $m$ a multiple of 4. 
 
 \smallskip
 Since the volume is a real analytic function on hyperbolic Dehn surgery space
(using $u$ as coordinate, as in \cite{NZ}),
it follows that (\ref{eq26}) holds for all large real values of $(p,q)$. Hence we can
deduce the asymptotic formula for decrease in volume for $m003$:
\begin{align}\label{eq27}
\Delta V_{m003}(a,b) &= \Delta V_{m004}(2a+b,b/2) \notag\\
&=  \frac{\sqrt{3} \pi ^2}{2 \left(a^2+a b+b^2\right)}
-\frac{\pi ^4 \left(-18 b^2 (2 a+b)^2+(2 a+b)^4+9 b^4\right)}{64 \sqrt{3} \left(a^2+a
   b+b^2\right)^4}
 + \ldots
 \end{align}
Introducing polar coordinates 
$r e^{i \theta} =  a+ b \omega$,
this simplifies to
\begin{equation}  \label{eq28}
\Delta V_{m003}(a,b) =   { \sqrt{3} \pi^2 \over 2 r^2} - { \pi^4 \over 4 \sqrt{3}}{ \cos(4\theta) \over r^4} + 
O\left({1 \over r^6}\right).
\end{equation} 
}
\end{ex}

\begin{ex}\label{m125_NZ4}
{\em
For the Whitehead link sister $m125$, Aaber and Dunfield \cite{AD}
study the Neumann-Zagier asymptotic expansion of volume for Dehn fillings.
If  $u,v$ denote the  logarithms of holonomies of the 
 meridian and longitude for the geometric
 basis for cusp 0 of $m125$, 
 then they show \cite[p1019]{AD} that
\begin{equation}  \label{eq29}
-v = i u + { -3 + i \over 48} u^3 +  \ldots
\end{equation} 
 so 
$c_1 = -\tau = i$ and $c_3 = {-3 + i \over 48}$ in our notation.
This gives the following asymptotic formula for decrease in volume
for surgeries on cusp 0:
\begin{equation}  \label{eq30}
\Delta V_{m125} (p,q)=  
{ \pi^2 \over |z|^2} -
{2 \pi^4}\,\text{Im}\, \left[  {-3 + i \over 48 z^4}\right] + O \left({1 \over |z|^6}\right),
\end{equation} 
where  $z =  p + q \tau = p - q i$.

More explicitly: 
\begin{equation}  \label{eq31}
\Delta V_{m125} (p,q)=  
{ \pi^2 \over p^2+q^2} -
\frac{\pi^4(p^4-12 p^3 q-6 p^2 q^2+12 p q^3+q^4)}{ 24\left(p^2+q^2\right)^4}
+ O \left({1 \over (p^2+q^2)^3}\right).
\end{equation} 
}
\end{ex}

\begin{ex} \label{m129_NZ4}
{\em For the Whitehead link complement $WL$ (as drawn in Figure \ref{fig.WL_WS})
we can compute the asymptotic expansion for $\Delta V$ using the
work of Neumann-Reid \cite{NR}.  Let  $u,v$ denote the  logarithms of holonomies of the 
 standard meridian and longitude for the link complement. Then \cite{NR} shows that
 $$u = \log x + \log(x+1) - \log(x-1) \text{ and } v = 4 \log x - 2\pi i,$$
 where $x \in \C$ is a suitable simplex parameter. From these equations we
 obtain  $$e^u = { x(x+1) \over (x-1)} \text{ where } x = i e^{-v/4}.$$
 This gives a quadratic equation for $x$ with the relevant solution given by
$$x=\frac{1}{2} \left(\sqrt{-6 e^u+e^{2 u}+1}+e^u-1\right) = i e^{-v/4},$$
and so
\begin{align} \label{eq32}
-v &= -4 \log\left(\frac{-i}{2} \left(\sqrt{-6 e^u+e^{2 u}+1}+e^u-1\right)\right) \notag\\
&= (-2+2i) u + {i u^3 \over 6} +  O(|u|^5) .
\end{align}
Hence $c_1 = -\tau = (-2+2i)$ and $c_3= { i \over 6}$ in our previous notation.

\smallskip
This gives the following asymptotic formula for decrease in volume
for Dehn filling on one cusp of $WL$:
\begin{equation}  \label{eq33}
\Delta V_{WL} (p,q)=  
{ 2 \pi^2 \over |z|^2} -
{2 \pi^4}\,\text{Im}\, \left[  { i \over 6 z^4}\right] + O \left({1 \over |z|^6}\right),
\end{equation}
where  $z =  p + q \tau = p + q (2-2i)$.
Hence
\begin{equation}  \label{eq34}
\Delta V_{WL}(p,q)=
\frac{2 \pi ^2}{p^2+4 p q+8 q^2}-\frac{\pi ^4 \left(p^2-8 q^2\right) \left(p^2+8 p q+8 q^2\right)}{3 \left(p^2+4 p q+8
   q^2\right)^4} + \ldots.
\end{equation}

We can convert from the standard peripheral curves for $WL$
 to the geometric peripheral curves on $m129$ using:
$WL(p,q)=m129(a,b)$ where $a=p+2q,  b=-q$ or $p=a+2b,q = -b$.
Then the new cusp shape parameter for  $m129$ is $\tau' = -2 i$ and
\begin{align}\label{eq35}
\Delta V_{m129}(a,b) &= \Delta V_{WL}(a + 2 b, -b) \notag\\
&= \frac{2 \pi ^2}{a^2+4 b^2} -\frac{\pi ^4 \left(a^4-24 a^2 b^2+16 b^4\right)}{3 \left(a^2+4 b^2\right)^4}
+ \ldots
\end{align}

}
\end{ex}

\section{Hyperbolic graphs with parabolic meridians}\label{sec.Vol}
In this section, we recall some results on hyperbolic graphs in $S^3$ with parabolic meridians; 
for details see \cite{HHMP} and \cite{Mas}.
In this paper, spatial graphs may contain link components, and we regard every link as a spatial graph without vertices.
We use the word {\em link} to emphasize that the graph does not have any vertices.
Let $G$ be a spatial trivalent graph in $S^3$ and let $N$ be a manifold obtained from $S^3\setminus G$ 
by removing an open regular neighbourhood of each vertex.
Then $N$ is a manifold with boundary consisting of thrice punctured spheres, one corresponding to each vertex of $G$. We say that
$G$ is {\em hyperbolic} if $N$ admits a hyperbolic metric of finite volume with geodesic boundary;
then the meridians of the graph correspond to parabolic isometries.
We can construct a hyperbolic link from a hyperbolic graph by using the following.

\begin{lem}[\cite{Mas}]\label{Vol}
For hyperbolic graphs, the moves shown in
Figure \ref{fig.move1}  are volume preserving, where regular neighbourhoods of the
two trivalent vertices have been removed in middle diagram.
\end{lem}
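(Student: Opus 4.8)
The plan is to recognize each move in Figure \ref{fig.move1} as a \emph{mutation} --- a cut-and-reglue operation along totally geodesic thrice-punctured spheres --- and to deduce the volume equality from the observation that the two graphs produced by the move correspond to two different isometric gluings of one and the same hyperbolic piece, namely the cut-open manifold represented by the middle diagram.

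I would begin by assembling the two geometric inputs. First, the complete finite-area hyperbolic structure on a thrice-punctured sphere $P$ is unique, and $P = \mathbb{H}^2/\Gamma(2)$ carries a rich isometry group whose orientation-preserving part is the symmetric group $S_3$ acting by all permutations of the three cusps. Second, by Adams (\cite{Ada}) every properly embedded essential thrice-punctured sphere in a hyperbolic $3$-manifold is isotopic to a \emph{unique} totally geodesic one. In the present setting the relevant thrice-punctured spheres are exactly the boundary components of $N$ created by deleting the neighbourhoods of the trivalent vertices, and these are geodesic by the very definition of a hyperbolic graph, so both inputs apply directly.

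Let $N_0$ be the hyperbolic manifold with geodesic boundary corresponding to the middle diagram, in which the neighbourhoods of the two trivalent vertices have been removed, producing totally geodesic thrice-punctured sphere boundary components. The two graphs appearing in the move are then obtained from $N_0$ by gluing these boundary spheres via two isometries that differ by an element $\sigma$ of the isometry group $S_3$ of the thrice-punctured sphere. Because each such sphere is totally geodesic and $\sigma$ is a genuine orientation-preserving isometry of it, the metrics on the pieces match smoothly across the gluing surface; hence each resulting manifold carries a complete hyperbolic metric with geodesic boundary and parabolic meridians, and is precisely the complement associated to the corresponding graph. This is Ruberman's mutation principle (\cite{Rub}) specialized to thrice-punctured spheres. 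Since both manifolds are assembled from the \emph{same} hyperbolic piece $N_0$ by isometric regluing, and since gluing along a totally geodesic surface leaves each piece's metric untouched, their volumes both equal $\vol(N_0)$, which gives the desired equality.

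The main obstacle I anticipate is not the gluing argument but verifying the combinatorial correspondence: one must check that the move as drawn in the planar diagram is actually realized by an isometry $\sigma$ lying in the symmetry group of the thrice-punctured sphere --- equivalently, that the induced permutation of the three edge-ends at the vertex is admissible --- and that the reglued manifold is genuinely the complement of the depicted new graph rather than some other manifold. This reduces to tracking how the three punctures, namely the three edges incident to each removed vertex, are permuted by the move, which can be checked directly from the figure.
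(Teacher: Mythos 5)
Your proposal is correct and follows essentially the same route as the paper: both cut along the totally geodesic thrice-punctured spheres at the two vertices and reglue by an element of the $S_3$ isometry group (identity for the left diagram, the transposition of punctures $2$ and $3$ for the right), so that all three manifolds are built from the same hyperbolic piece and hence share a volume. The one step you defer --- checking that the reglued manifold really is the complement of the depicted tangle --- is exactly what the paper carries out explicitly via the $S^2\times S^1$ picture in Figure \ref{fig.seq}, and it also notes (citing \cite{Pur}) that the $2$-punctured disk in the left and right diagrams is essential, which justifies the inverse (cutting) direction of the move.
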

\begin{figure}[h]
\includegraphics[scale=1]{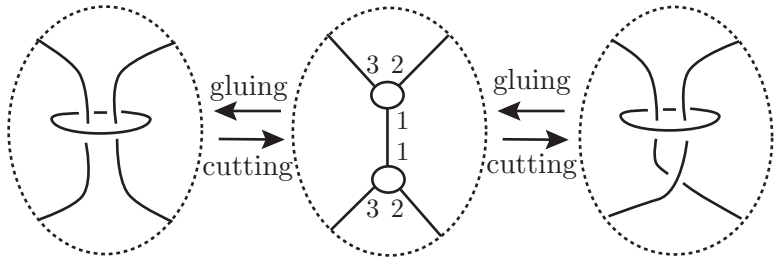}
\caption{Volume preserving moves on hyperbolic graphs}
\label{fig.move1}
\end{figure}
\begin{proof}
Note that each vertex in a hyperbolic graph corresponds to a totally geodesic thrice punctured sphere and each edge corresponds to an annulus cusp.
Moreover, the hyperbolic structure on a thrice punctured sphere is unique by \cite{Ada};
hence any orientation preserving homeomorphism between hyperbolic thrice punctured spheres is isotopic to an isometry.
Since any homeomorphism between thrice punctured spheres is uniquely determined by its action on the cusps, we may denote homeomorphisms as elements of $S_3$,
the symmetry group of degree 3.

\smallskip
Fix labels for the cusps of the thrice punctured spheres as in Figure \ref{fig.move1}.
If we glue the boundaries via the homeomorphism corresponding to the identity permutation,
then we get the tangle on the left of Figure \ref{fig.move1}.
If we glue the boundaries via the homeomorphism corresponding to the permutation
interchanging 2 and 3,
we get the tangle on the right of Figure \ref{fig.move1}.

\begin{figure}[h]
\includegraphics[scale=0.9]{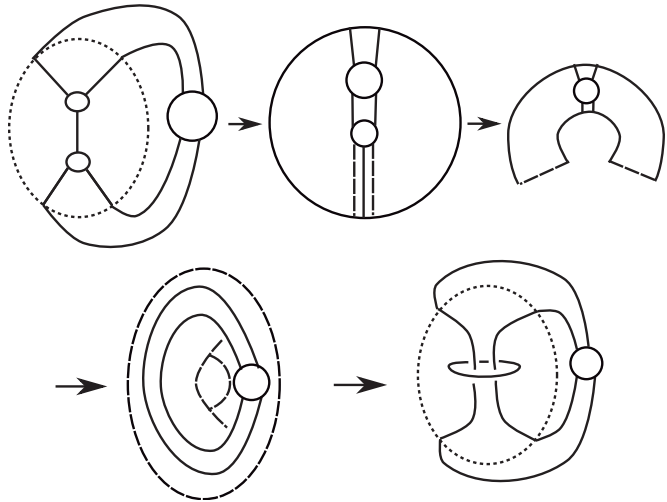}
\caption{Gluing by a homeomorphism}
\label{fig.seq}
\end{figure}

This is illustrated in Figure \ref{fig.seq}. Here we regard $S^3$ as $B^3\cup B^3$ where one ball $B^3$ is a
regular neighbourhood of a vertex of the graph shown in the middle of Figure \ref{fig.move1}.
We then reverse the inside and the outside with respect to the boundary sphere $S^2 = \bd B^3$.
After gluing, we get a link or graph in $S^2\times S^1$.
The component coming from the cusp labelled by $1$ is a loop which corresponds to a generator of the fundamental group of $S^2\times S^1$, and its complement is homeomorphic to a solid torus.
Thus we get a tangle as shown on the left or right of Figure \ref{fig.move1}.

\smallskip
The inverse move (left or right to middle) is valid since any essential thrice punctured sphere (or, $2$-punctured disk) in a hyperbolic 3-manifold is totally geodesic (\cite{Ada}).
Since we are dealing with hyperbolic graphs, the $2$-punctured disk on the left or right of Figure \ref{fig.move1} is essential (see \cite{Pur} Lemma 2.1).
\end{proof}

\begin{defi}
{\em
A {\em fully augmented link} is a link obtained by applying the gluing moves of Lemma \ref{Vol} to a planar diagram of a hyperbolic graph (embedded in the plane) 
so that the resulting graph is actually a link.
A {\em crossing circle} is a component of a fully augmented link which appears as a circle in Figure \ref{fig.move1}.
A {\em knot component} of a fully augmented link is a component which is not a crossing circle.
}
\end{defi}
\begin{rmk}
{\em These definitions of a fully augmented link, a crossing circle, and a knot component are equivalent to the original definitions in \cite{FP} or \cite{Pur}.}
\end{rmk}

\subsection{Polyhedral decomposition for the complement of  a hyperbolic graph with a planar diagram}
\label{polyhedral_decomp}

Given a hyperbolic graph $P$ with a planar diagram $D$, let $V$ be the set of vertices of $P$ and 
$N_P = S^3\setminus P \setminus (\cup_{v\in V} \mathcal{N}(v) )$ where $\mathcal{N}(v)$ is a regular neighbourhood of $v$. 
Let $\Pi$ be the plane containing $D$. 
Then $N_P$ admits hyperbolic metric of finite volume with geodesic boundary.
We can decompose $N_P$ into two isometric convex ideal hyperbolic polyhedra by the following procedure, depicted in Figure \ref{fig.ATdecomp}. 
\begin{description}
\item[Step 1] Cut $N_P$ along $\Pi$. 
\item[Step 2] Collapse each edge of $P$ to a point.
\end{description}
Let $L$ be a fully augmented link which is obtained from $D$ by the gluing moves of Lemma \ref{Vol}.
Then this decomposition is exactly equal to the decomposition of $S^3\setminus L$ found by 
Agol-Thurston in \cite{Lac} (see also \cite{FP}, \cite{Pur}).

\begin{figure}[h]
\includegraphics[scale=1]{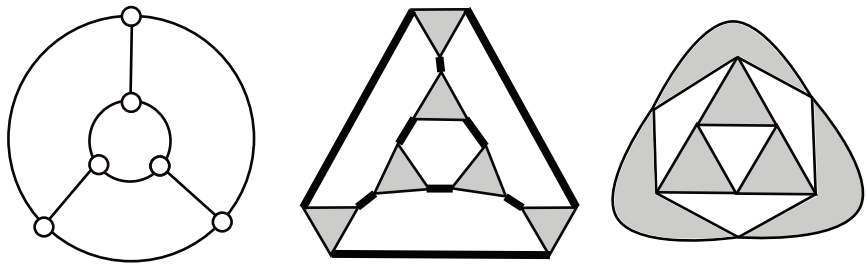}
\caption{Polyhedral decomposition of a graph complement}
\label{fig.ATdecomp}
\end{figure}

We shade the faces coming from the thrice punctured sphere boundaries of $N_P$ and leave the faces on $\Pi$ white as in \cite{FP}, \cite{Pur}.
Note that every shaded face is an ideal triangle, and since reflection $\phi$ about $\Pi$ leaves each thrice punctured sphere boundary invariant, 
the shaded faces are all orthogonal to the adjacent white faces.
Hence the dihedral angle of each edge is $\pi /2$.
Since $\phi$ preserves white faces, each face of these polyhedra is totally geodesic.
Hence each white face extends to the boundary at infinity $S^2_{\infty}$ of hyperbolic space to give a Euclidean circle and 
such circles corresponding to adjacent faces are tangent to each other.
Therefore the white faces of each polyhedron determine a circle packing of $S^2_{\infty}$ whose nerve is isomorphic to the dual of the original graph diagram $D$.

\section{Associated circle packings and cusp moduli}\label{pack}
In this section we study the circle packing corresponding to the hyperbolic graph $W_n$ shown in Figure \ref{fig.graphW}. 

\begin{figure}[h]
\includegraphics[scale=1]{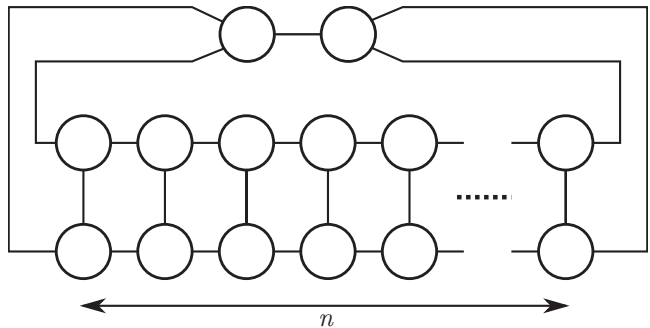}
\caption{Graph $W_n$ with $2n+2$ vertices}
\label{fig.graphW}
\end{figure}

The two isometric polyhedra in the decomposition of the complement of $W_n$ described in section \ref{sec.Vol} are obtained by
gluing together $n$ regular ideal octahedra (see Figure \ref{fig.graphW2} (right)).
Therefore the volume of its complement is $2nV_8$ where $V_8= 3.663862 \ldots$ is the volume of the regular ideal octahedron.
A circle packing corresponding to the graph is depicted in Figure \ref{fig.graphW2} (left). 
Each tangency point of the circle packing corresponds to an annulus cusp of the complement of $W_n$.
\begin{figure}[h]
\includegraphics[scale=1]{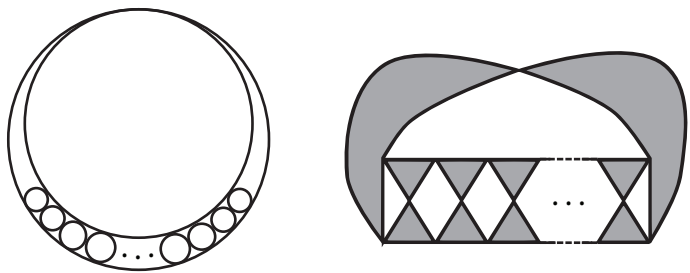}
\caption{A circle packing obtained from the graph $W_n$ (left) and a (topological) polyhedral decomposition (right)}
\label{fig.graphW2}
\end{figure}

Recall from Remark \ref{cusp_param} that each torus cusp of a hyperbolic $3$-manifold has a 
Euclidean structure $\C/\Gamma$
where $\Gamma \cong \Z^2 \subset \C$ is a lattice,  and its shape parameter is determined by giving
 two independent translations corresponding to a choice of basis for $\Gamma$.
Since there are infinitely many ways to choose such a basis, this shape parameter is not unique.
The {\em modulus} of a torus cusp is defined as the complex ratio
\begin{equation} \label{eq_mod}
z = (\text{second shortest translation})/(\text{shortest translation}).
\end{equation}
It is a complex number lying in the region $|$Re$(z)| \leq 1/2$  and  $|z|\geq 1$.
If $z$ lies on the boundary of this region, we choose it so that Re$(z) \geq 0$.

\smallskip
For the complement of $W_n$, there are three different types of annulus cusps, 
see Figures \ref{fig.pack1}, \ref{fig.pack2} and \ref{fig.pack3}.
The circle packing diagrams giving the cusp shapes are obtained by M\"{o}bius transformations 
which map one of the marked cusps to infinity.
Since the graph complement is reconstructed by gluing the two isometric polyhedra along their corresponding white faces, 
we get the boundary of a horoball neighbourhood of the annulus cusp
by gluing two copies of rectangles together along corresponding edges depicted by solid lines.
Later, we glue these annuli cusps together along their boundaries (depicted as dotted lines)  to get a torus cusp, and compute its modulus.

\smallskip
By using Lemma \ref{Vol}, we obtain hyperbolic links from $W_n$.
The modulus of each cusp can be computed by knowing how to glue the rectangles according to the gluing pattern of thrice punctured spheres.
\begin{figure}[h]
\includegraphics[scale=0.9]{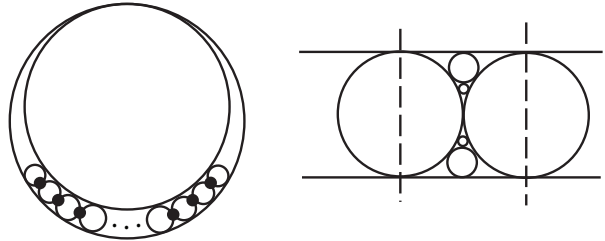}
\caption{Cusp modulus for ideal vertices of type 1}
\label{fig.pack1}
\end{figure}
\begin{figure}[h]
\includegraphics[scale=0.9]{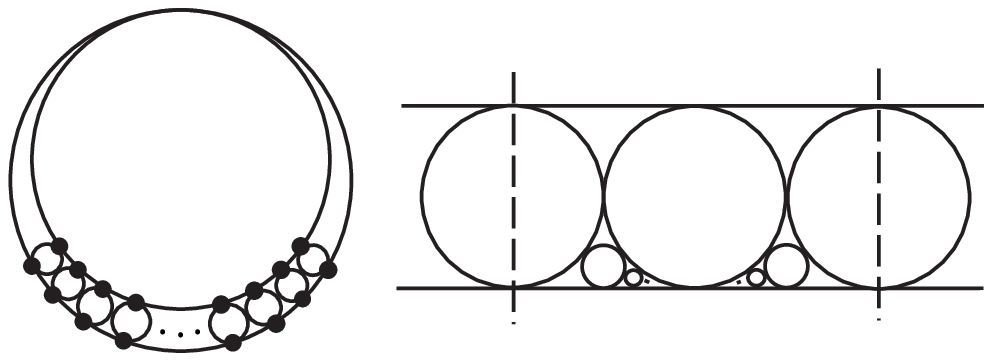}
\caption{Cusp modulus for ideal vertices of type 2}
\label{fig.pack2}
\end{figure}
\begin{figure}[h]
\includegraphics[scale=0.9]{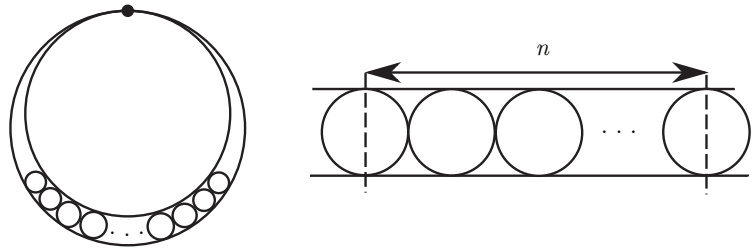}
\caption{Cusp modulus for ideal vertices of type 3}
\label{fig.pack3}
\end{figure}

\begin{ex}\label{example}
{\em
The graph $G_{n,k}$ 
in Figure \ref{fig.example} (top) is obtained from the graph in Figure \ref{fig.graphW} by $k+2$ 
applications of Lemma \ref{Vol}, where $k+2 \le n$.
The torus cusp corresponding to the knot component (with $k$ crossings) 
comes  from the $2(k+1)$ marked vertices 
in Figure \ref{fig.example} (bottom).
Note that in Figure \ref{fig.example} (also in Figure \ref{fig.source}), 
the choice of crossings below the crossing circles is not important, because changing 
an over crossing to an under crossing does not change the graph complement.

\begin{figure}[h]
\includegraphics[scale=1]{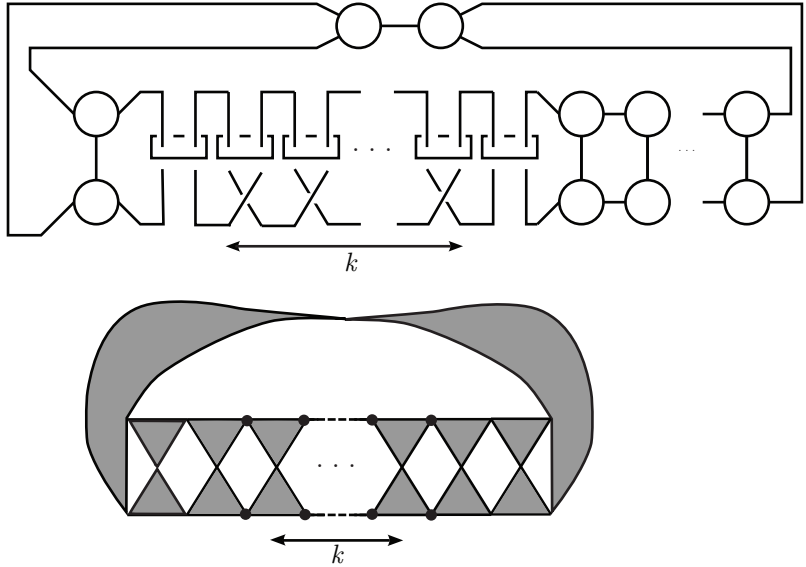}
\caption{A graph $G_{n,k}$ obtained from $W_n$ (top) and 
vertices corresponding to the knot component cusp (bottom)
}
\label{fig.example}
\end{figure}

In Figure \ref{fig.cuspshapes}, we depict the shapes of the torus cusps.
The left diagram shows the shape of a crossing circle cusp without a half twist below,
the middle shows  the shape of a crossing circle cusp with a half twist,
and the right shows the shape of the cusp which comes from the knot component.

\begin{figure}[h]
\includegraphics[scale=1]{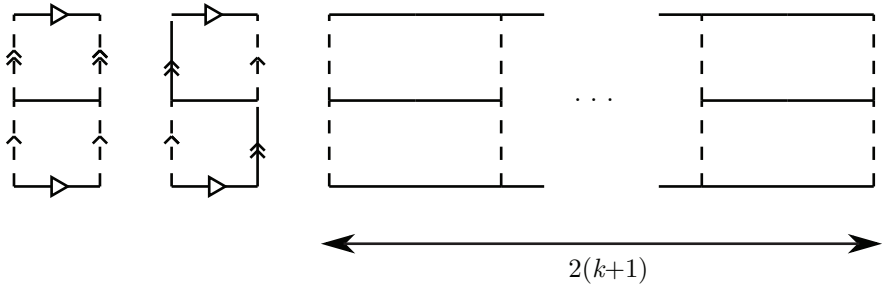}
\caption{Shapes of the torus cusps for the graph $G_{n,k}$ in Figure \ref{fig.example}}
\label{fig.cuspshapes}
\end{figure}

The shape of the torus cusp of the knot component in Figure \ref{fig.example} is obtained by arranging 
$2 \times 2(k+1)$ copies of the $1 \times 2$ 
rectangle in Figure \ref{fig.pack2} to form a
$2 \times 4(k+1)$ rectangle with opposite edges identified by translations.
Therefore, its cusp modulus is $2(k+1)\sqrt{-1}$.
The modulus of each crossing circle cusp depends on the existence of a half twist;
if it has a half twist below, the modulus is $\sqrt{-1}$ and if not, it is $2\sqrt{-1}$. 
(Note that these moduli do not depend on the total number of vertices in the graph $G_{n,k}$.)
}
\end{ex}

\section{Construction of different manifolds sharing the same volume}\label{const}
We start with the graph with $2n$ vertices shown in Figure \ref{fig.source}.
This graph is obtained by applying the gluing move in Lemma \ref{Vol} $n+1$ times to $W_{2n}$.

\begin{figure}[h]
\includegraphics[scale=1]{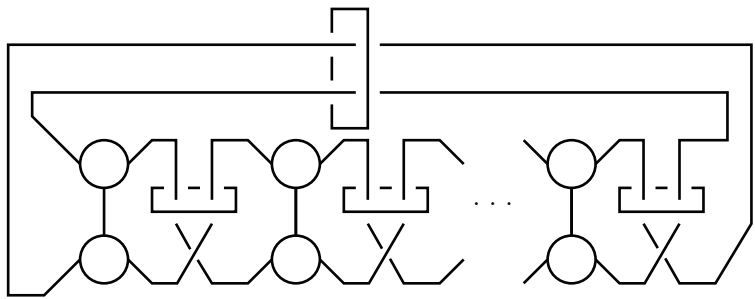}
\caption{Starting point: graph with $2n$ vertices (represented by circles)}
\label{fig.source}
\end{figure}

First, we label every remaining pair of thrice punctured spheres by an element of $\{0,1\}$.
For each such pair 
we apply Lemma \ref{Vol} as follows. 
We glue the pair of thrice punctured spheres labelled by $0$ (resp. $1$) without (resp. with) a half twist, as in Figure \ref{fig.move1} (left) (resp.(right)).
By this construction, we get a hyperbolic link complement $M_b$ for each cyclic binary word $b$ of length $n$ and its volume is $4n V_8$.

\smallskip
Now we consider the decomposition of $M_b$ into isometric convex hyperbolic polyhedra with shaded faces described in section 
\ref{polyhedral_decomp}. Note that every edge lies in exactly one shaded triangular face,
since the faces have a checkerboard colouring.

\begin{defi}[\cite{FP}, \cite{Pur}]{\em
Given an edge $e$ in such a polyhedral decomposition, 
let $t$ be the shaded
ideal triangle containing $e$. Then  the {\em midpoint} $m$ of $e$ is the
foot of the perpendicular in $t$ from the vertex of $t$ opposite $e$ to $e$.
}
\end{defi}
Since isometries preserve angles, when we glue two thrice punctured sphere boundaries together by an isometry, it maps midpoints to midpoints.
Moreover, for fully augmented links, we may expand horoball neighbourhoods of the cusps to find a horoball packing such that the horoballs bump together precisely at the midpoints of edges. 
More precisely,
\begin{thm}[\cite{FP}, \cite{Pur}]\label{thm.packing}
Let $L$ be a fully augmented link. Then there exist horoball neighbourhoods of the cusps of $S^3\setminus L$ such that the midpoint of every edge is a point of tangency of the corresponding horospherical tori.
\end{thm}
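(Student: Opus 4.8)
The plan is to reduce the global statement to a single model computation in one ideal triangle, and then to fit these local pictures together using the right-angled structure of the Agol--Thurston decomposition. First I would pin down the local model. Each shaded face is an ideal triangle $t$, and I would normalise it in the upper half-space model to have ideal vertices $\infty, 1, -1$. Placing a horoball at each ideal vertex and demanding that the three be pairwise tangent gives three equations in the three sizes (the height $c$ at $\infty$ and the Euclidean diameters $d_1, d_{-1}$ at $\pm 1$), namely $d_1 = c$, $d_{-1} = c$ and $d_1 d_{-1} = 4$, with the unique positive solution $c = d_1 = d_{-1} = 2$. A direct check shows that the three tangency points are then $(1,2)$, $(-1,2)$ and $(0,1)$, which are precisely the feet of the perpendiculars from the opposite vertices, i.e. the three midpoints in the definition preceding the theorem; equivalently, in this configuration the horocyclic arc cut off inside $t$ at each cusp has hyperbolic length $1$. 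Thus ``tangent at the midpoint'' is an intrinsic, symmetric configuration on each shaded triangle.

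Next I would reformulate what must be arranged in the manifold. If $H_v$ is a horoball neighbourhood at each cusp $v$, then the tangency condition along an edge $e$ with endpoints $v,w$ is exactly that the midpoint $m(e)$ lie on both $\partial H_v$ and $\partial H_w$: since a horosphere centred at $v$ meets $e$ orthogonally, any two horospheres through a common point of $e$ are automatically tangent there, with interiors on opposite sides. Hence it suffices to choose, for each cusp $v$, a single horosphere passing through the midpoints of \emph{all} edges incident to $v$. The tangencies along shared edges are then automatic, and since the face pairings of the decomposition are isometries of thrice-punctured spheres and these carry midpoints to midpoints, the construction is consistent and descends to $S^3 \setminus L$. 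So the entire theorem reduces to the claim that, for each cusp $v$, all midpoints of edges incident to $v$ lie on one horosphere centred at $v$.

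Finally I would attack this claim using the explicit geometry of the right-angled decomposition. Lifting so that $v$ sits at $\infty$, every edge at $v$ is a vertical geodesic over a point $p \in \C$, and the local computation shows that the midpoint of the edge over $p$ has Euclidean height $|p-q|$, where $q$ is the third ideal vertex of the unique shaded triangle containing that edge. The claim therefore becomes the statement that, along each shaded face through $v$, consecutive ideal vertices are equidistant, with one common spacing shared by all shaded faces at $v$. The easy part is handled within a single triangle: the reflective symmetry of the ideal triangle forces its two edges at $v$ to have equal midpoint heights. The genuinely global part is passing between distinct shaded faces at $v$, across the intervening white faces, and I expect this to be the main obstacle. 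My plan is to derive it from the uniform rectangular structure of the cusp cross-sections of a fully augmented link produced by the circle packing at infinity (as worked out for the family $W_n$ in Section~\ref{pack}): the orthogonality of shaded and white faces forces the cross-section at each cusp to be a rectangular tiling with constant vertex spacing along shaded faces. Once that uniformity is established, I place $H_v$ at the common height $|p-q|$ at every cusp, and the reduction above completes the proof.
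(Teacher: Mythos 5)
First, a point of comparison: the paper does not prove this statement at all --- it is quoted from \cite{FP} and \cite{Pur} --- so your proposal is being measured against the source proof rather than anything in this text. Your local computation is correct and is indeed the standard starting point: normalising a shaded ideal triangle to have vertices $\infty,1,-1$, the unique mutually tangent horoball configuration has tangency points $(1,2)$, $(-1,2)$, $(0,1)$, which are the three midpoints, and the reduction to ``for each cusp $v$, all midpoints of edges incident to $v$ lie on a single horosphere centred at $v$'' is also the right reformulation. The problem is that this reduction is essentially all of the routine content of the theorem, and the remaining claim --- which you correctly isolate as ``the genuinely global part'' --- is exactly where the proposal stops. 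Writing ``I expect this to be the main obstacle'' and ``once that uniformity is established'' concedes the theorem rather than proving it. Moreover, the justification you gesture at does not work as stated: orthogonality of shaded and white faces gives you only that each cusp cross-section is tiled by combinatorial rectangles; it does not by itself force the shaded sides of \emph{different} rectangles in that tiling to have a common length, and the computation for the specific family $W_n$ in section \ref{pack} is an example, not an argument.

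To close the gap you need to propagate the ``canonical height'' from rectangle to rectangle across the tiling of the cusp torus, using the specific geometry of the Agol--Thurston decomposition. With $v$ at $\infty$, a single rectangle comes from one ideal vertex of one polyhedron: its two white sides lie on vertical planes over parallel lines $\ell_1,\ell_2$ and its two shaded sides on vertical planes over lines $m_1,m_2$ orthogonal to them; the two shaded triangles at this vertex have finite vertices $\ell_1\cap m_j$ and $\ell_2\cap m_j$, so both have midpoint height equal to the distance from $\ell_1$ to $\ell_2$ --- the two shaded sides of one rectangle automatically agree. Crossing a shaded side, the canonical height is intrinsic to the shared totally geodesic thrice-punctured sphere (it is the height at which the horocyclic arc in the ideal triangle has length $1$), so it matches by definition. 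Crossing a white side, the adjacent polyhedron is the image of the first under the reflection in that vertical white plane (a lift of the reflection $\phi$ of $S^3\setminus L$ in the projection plane $\Pi$, which fixes the white faces pointwise and swaps the two polyhedra); this is a Euclidean isometry fixing $\infty$, so it preserves heights. Connectivity of the rectangle tiling then gives a single height $h_v$ for all midpoints at $v$, and your reduction finishes the argument. You should also say a word about why the resulting horoballs are embedded with disjoint interiors (so that ``tangency'' is meaningful for the application in Theorem \ref{change_mfd}); this again follows from the right-angled polyhedral structure, but it is not free.
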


\begin{rmk}\label{horo_packing_invariant}
{\em
For each $M_b$, we fix the horoball packing $\mathcal{H}_b$  given by Theorem \ref{thm.packing}.
In fact, this horoball packing depends only on the geometry of $M_b$ and not on any choice of
polyhedral decomposition for $M_b$. This follows since the horospherical tori for $\mathcal{H}_b$
 can be defined as follows:  For each cusp with modulus $\sqrt{-1}$ or $2\sqrt{-1}$ we choose
 the horospherical torus of area $2$, and for each cusp with modulus $m \sqrt{-1}$ with $m>2$
 we choose the horospherical torus of area $4m$.
 These areas are easily  calculated from the diagrams in 
 Figures \ref{fig.pack1}--\ref{fig.pack3} and Figure \ref{fig.cuspshapes}, 
 using the fact that each dotted edge  in Figures 10-12 has length 1 in the horospherical tori for $\mathcal{H}_b$,
 (since it is a horocycle joining midpoints of edges in an ideal hyperbolic triangle).
 }
 \end{rmk}

\begin{thm}\label{change_mfd}
Let $b_i$ be a binary cyclic word of length $n\ge 3$ 
and let $M_{b_i}$ be the hyperbolic link complement
obtained from $b_i$ by the above procedure for $i = 1,2$.
Then $M_{b_1}$ is isometric to $M_{b_2}$ if and only if $b_1$ and $b_2$ are related by the natural action of $D_n$, the dihedral group of order $2n$.
\end{thm}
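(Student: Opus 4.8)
The plan is to prove the two implications separately, passing freely between ``isometric'' and ``homeomorphic'' via Mostow--Prasad rigidity, and to recover the cyclic word from the canonical horoball packing $\mathcal{H}_b$ of Remark \ref{horo_packing_invariant}.

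For the ``if'' direction I would exploit the manifest symmetry of the construction. The $n$ gluing sites in Figure \ref{fig.source} are arranged cyclically, so a rotation of the planar diagram by $2\pi/n$ carries the link built from $b=\beta_1\beta_2\cdots\beta_n$ to the one built from the cyclic shift $\beta_2\cdots\beta_n\beta_1$, while a reflection of the diagram carries it to the link built from the reversed word $\beta_n\cdots\beta_1$. These are homeomorphisms of link complements, hence isometries by Mostow rigidity. The only point needing care is that a reflection preserves labels: reflecting reverses the chirality of each half-twist, but by the observation in Example \ref{example} that switching an over/undercrossing does not change the graph complement, a label-$1$ site remains a label-$1$ site. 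Together these realize the full $D_n$-action, establishing this direction.

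For the ``only if'' direction, suppose $\phi\colon M_{b_1}\to M_{b_2}$ is an isometry. By Remark \ref{horo_packing_invariant} the packing $\mathcal{H}_b$ is intrinsic to the hyperbolic structure, its horospherical tori being prescribed by cusp areas that depend only on the cusp moduli; hence $\phi$ carries $\mathcal{H}_{b_1}$ onto $\mathcal{H}_{b_2}$, sending each cusp to a cusp of the same modulus and each tangency point (edge midpoint) to a tangency point. Using the computations of Example \ref{example} and Figures \ref{fig.pack1}--\ref{fig.pack3} and \ref{fig.cuspshapes}, I would partition the cusps by modulus into the crossing-circle cusps, with modulus $\sqrt{-1}$ (label $1$) or $2\sqrt{-1}$ (label $0$), and the long-component cusp(s), with modulus $m\sqrt{-1}$ for some $m>2$. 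Since $\phi$ preserves moduli it respects this partition and in particular matches the distinguished long cusp of $M_{b_1}$ with that of $M_{b_2}$. Traversing the longitudinal direction of this cusp torus, the tangency points against the crossing-circle tori occur in the cyclic order of the crossing circles, each labelled $0$ or $1$ by the modulus of the circle it meets; this labelled cyclic sequence is exactly the word $b$. As $\phi$ sends tangencies to tangencies it either preserves or reverses this traversal, so it induces a rotation or an order-reversal of the sequence, and therefore $b_1$ and $b_2$ are related by an element of $D_n$.

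The step I expect to be the main obstacle is the final bookkeeping: verifying rigorously, from the explicit cusp diagrams, that the labelled cyclic tangency sequence along the long cusp genuinely reproduces $b$, and that the combinatorial automorphisms of this tangency pattern form exactly $D_n$ with no accidental extra symmetries. This is where the hypothesis $n\ge 3$ is needed to exclude degenerate small cases, and where one must confirm that highly symmetric words (such as the all-$0$ or all-$1$ word) do not acquire symmetries beyond the dihedral ones. A subsidiary point to pin down is that the long-component cusp is canonically distinguished from the crossing-circle cusps, so that $\phi$ cannot interchange the two families; this is guaranteed by the modulus inequality $m>2$ but should be recorded explicitly.
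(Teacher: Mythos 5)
Your ``if'' direction, and your starting point for ``only if'' (Mostow rigidity together with the fact from Remark \ref{horo_packing_invariant} that an isometry must carry $\mathcal{H}_{b_1}$ to $\mathcal{H}_{b_2}$, hence preserve cusp moduli and horoball tangencies), agree with the paper. But the mechanism you propose for recovering $b$ is different from the paper's, and it has genuine gaps. You want to read the word letter by letter from the moduli $\sqrt{-1}$ versus $2\sqrt{-1}$ of the crossing-circle cusps, taken in the cyclic order of their tangencies along a distinguished long cusp. Three problems. First, $M_b$ has $2n$ small crossing circles, not $n$: the $n$ ``letter'' circles introduced when $b$ is encoded, and $n$ ``structural'' circles already present in the starting graph of Figure \ref{fig.source}. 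Both kinds have modulus $\sqrt{-1}$ or $2\sqrt{-1}$, so modulus alone does not single out the letter circles, and your identification of the tangency sequence with $b$ is not established. Second, you never verify that the horoball of the top crossing circle $C$ is actually tangent in $\mathcal{H}_b$ to the letter circles' horoballs; the paper only records tangencies among $C$ and the knot-component cusps, and nothing you say rules out that these are the only large-scale tangencies. Third, $C$ is not canonically distinguished by its modulus: a knot-component cusp coming from a run $01^i0$ has modulus $4(i+1)\sqrt{-1}$, which equals $n\sqrt{-1}$ for suitable $n$ and $i$ (e.g.\ $n=8$, $i=1$), so an isometry could a priori send $C$ to a knot-component cusp and your traversal would read off the wrong data.

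The paper avoids all three issues by working only with the large-modulus cusps. It splits $b$ at its zeros into runs $01^i0$; each run produces a knot-component cusp $C_j$ of modulus $4(i_j+1)\sqrt{-1}$, so the run-lengths of $1$'s (not the individual letters) are what get encoded as moduli. It then forms a labelled graph with vertices $C_1,\dots,C_k$ and $C$ and edges given by horoball tangency in $\mathcal{H}_b$; this graph is a $k$-cycle carrying the cyclic order of the runs, coned off by $C$, and it is an isometry invariant from which $b$ is reconstructed up to the $D_n$-action. The cone structure distinguishes $C$ even when its modulus collides with some $m_{i_j}$, and the all-ones word ($k=0$) is handled as a separate case. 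To salvage your letter-by-letter scheme you would need to prove exactly the tangency and labelling facts listed above from the cusp diagrams; the run-length version is what the paper actually establishes.
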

\begin{proof}
Let $b$ be a binary cyclic word of length $n$. If $b$ contains $k \ge 1$ zeros,
then we can split $b$ into $k$ subwords of the  form $p_i =  011 \cdots  10$
with $i$ $1$'s $(i\geq 0)$ by using each zero
in $b$ exactly twice.

After the gluing move in Lemma \ref{Vol}, each $p_{i}$ produces a torus cusp with $2i+1$ crossings, whose modulus is $m_i = 2(2i+2)\sqrt{-1} = 4(i+1)\sqrt{-1}$ (see Example \ref{example}).
Let $\{ p_{i_1}, p_{i_2},..., p_{i_k} \}$ be the decomposition of $b$, and let 
$C_j$ be the cusp corresponding to the knot component produced by $p_{i_j}$. 
We get a cyclic sequence $\{m_{i_1}, m_{i_2}, ..., m_{i_k}\}$ of cusp moduli from $b$.
Note that the total number of cusps of $M_b$ is $2n+1+k$;  the crossing circle cusp 
$C$ at the top of
Figure \ref{fig.source} has modulus $n \sqrt{-1}$ (from the cusp diagram in Figure \ref{fig.pack3}), 
and the other $2n$ crossing circle cusps
have modulus $\sqrt{-1}$ or $2\sqrt{-1}$ by Example \ref{example}.

We now only consider cusps whose modulus has norm greater than $2$.
For $n \ge 3$,
these are the cusps $C_1, \ldots C_k$ and $C$.
From $M_b$, we will construct a labelled {\em cusp graph}  $\mathcal{C}_b$ 
with vertices $\{v_1, v_2, ..., v_{k+1}\}$ as follows.
We label $v_j$ by $m_{i_j}$ for $1\leq j\leq k$ and $v_{k+1}$ by $n\sqrt{-1}$, 
the modulus of $C$. 
Then join $v_i$ and $v_{i'}$ by an edge if in $\mathcal{H}_b$, horoball neighbourhoods of the cusps corresponding to $v_i$ and $v_{i'}$ are tangent to each other.
In $\mathcal{H}_b$, the horoball neighbourhood of each cusp $C_j$ bumps only into $C_{j-1}$, $C_{j+1}$ (with index modulo $k$), and $C$.
Therefore, the graph $\mathcal{C}_b$  
consists of a cycle of length $k$ whose vertices are labelled cyclically by the $m_{i_j}$'s and one vertex labelled by $n$ joined to every vertex of the cycle. 

The special case when $b=111 \cdots 1$ and $k=0$ is slightly different. Then $M_b$ has one cusp $C$ of modulus
$n \sqrt{-1}$ as above, two knot components giving cusps $C_1,C_2$ of modulus $2n \sqrt{-1}$, and $2n$
cusps of modulus $\sqrt{-1}$. In this case, the cusp graph $\mathcal{C}_b$ is a 3-cycle.

\smallskip
Conversely given such a labelled cusp graph, we can reconstruct a binary cyclic word.
Therefore given two binary cyclic words $b_1$ and $b_2$ of length $n$,  the graph $\mathcal{C}_{b_1}$ is isomorphic to 
$\mathcal{C}_{b_2}$ as a graph with labelled vertices if and only if $b_1$ is related to $b_2$ by the natural action of $D_{n}$.
Moreover, by Remark \ref{horo_packing_invariant}, any isometry between manifolds $M_{b_1}$ and $M_{b_2}$ takes
the horoball packing $\mathcal{H}_{b_1}$ to $\mathcal{H}_{b_2}$.
Hence, the manifold $M_{b_1}$ is isometric to $M_{b_2}$ if and only if the graph $\mathcal{C}_{b_1}$ is isomorphic to $\mathcal{C}_{b_2}$ as a  graph with labelled vertices.
This completes the proof.
\end{proof}

\begin{thm}\label{thm.expo}
For each $n \ge 3$, there are at least $2^n/(2n)$ different hyperbolic 
link complements of volume $4nV_8$. 
\end{thm}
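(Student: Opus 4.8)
The plan is to count the number of binary cyclic words of length $n$ up to the action of the dihedral group $D_n$, and then invoke Theorem \ref{change_mfd} to conclude that distinct orbits give non-isometric hyperbolic link complements, all sharing the volume $4nV_8$. First I would observe that there are $2^n$ binary words of length $n$ (each position independently labelled $0$ or $1$), and that the dihedral group $D_n$ acts on these words. By Theorem \ref{change_mfd}, two words $b_1,b_2$ produce isometric manifolds $M_{b_1}\cong M_{b_2}$ precisely when they lie in the same $D_n$-orbit. Hence the number of distinct hyperbolic link complements of volume $4nV_8$ produced by this construction equals the number of $D_n$-orbits of binary words of length $n$.

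The key quantitative step is to bound this number of orbits from below. Since $D_n$ has order $2n$, each orbit contains at most $2n$ words, so the number of orbits is at least $2^n/(2n)$. This is the crude but sufficient lower bound stated in the theorem:
\begin{equation*}
\#(\text{orbits}) \;\ge\; \frac{\#(\text{words})}{|D_n|} \;=\; \frac{2^n}{2n}.
\end{equation*}

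The main subtlety — and the only place where care is needed — is making sure these $M_b$ are all genuinely hyperbolic link complements of the stated volume, and that Theorem \ref{change_mfd} applies for every such word. The hypothesis $n \ge 3$ is exactly what Theorem \ref{change_mfd} requires, and the volume computation $4nV_8$ was established in the construction preceding that theorem (from the decomposition into $2n$ regular ideal octahedra, giving volume $2(2n)V_8 = 4nV_8$). The orbit-counting inequality itself is elementary and requires no appeal to Burnside's lemma; one simply uses that the $D_n$-orbit of any word has size dividing $|D_n| = 2n$, hence at most $2n$. Therefore the number of non-isometric manifolds is at least $2^n/(2n)$, completing the proof.

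I do not expect any genuine obstacle here, since all the geometric content (distinguishing manifolds via their labelled cusp graphs $\mathcal{C}_b$, and the invariance of the horoball packing $\mathcal{H}_b$) has already been handled in Theorem \ref{change_mfd}. The remaining work is purely a counting argument, and the chosen bound $2^n/(2n)$ is deliberately the simplest one that grows exponentially in $n$ — and hence, since the volume $v_n = 4nV_8$ is linear in $n$, exponentially in $v_n$, which is precisely the growth rate asserted in the introduction.
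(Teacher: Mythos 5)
Your proposal is correct and follows essentially the same route as the paper: Theorem \ref{change_mfd} reduces the count to $D_n$-orbits of binary cyclic words of length $n$, and since there are $2^n$ such words and each orbit has at most $|D_n|=2n$ elements, there are at least $2^n/(2n)$ orbits, hence at least that many non-isometric link complements, each of volume $4nV_8$. The paper's proof is exactly this two-line orbit count, so there is nothing to add.
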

\begin{proof}
By theorem \ref{change_mfd}, there are at least as many hyperbolic link complements of volume $4nV_8$ as cyclic binary words of length $n$, up to the action of $D_n$. But there are $2^n$ based cyclic binary words of length $n$,
and each orbit under the $D_n$ action has at most $2n$ elements. So the result follows.
\end{proof}

\begin{rmk}
{\em For the sequence $v_n = 4nV_8$, this gives 
a logarithmic growth rate
$$\limsup_{n\to\infty} {\log  N_L(v_n) \over v_n} \ge
{\log 2 \over 4 V_8} \approx 0.0472962 .$$
Chesebro-BeBlois \cite{CD} construct examples of $2^n/2$
hyperbolic link complements of volume $w_n \approx 24.092184n + 2V_8$,
giving 
a logarithmic growth rate
$$\limsup_{n\to\infty} {\log  N_L(w_n) \over w_n} 
 \ge 0.0287706 .$$
}
\end{rmk}

\section{Some Open Questions} 

\begin{enumerate}

\item Find additional exact values of $N(v)$ or upper bounds on $N(v)$.

\ms
\item (Gromov \cite{Gr}, 1979) Is $N(v)$ locally bounded?

\ss
This reduces to the question of whether the number of manifolds with a given volume $v$
is uniformly bounded amongst all hyperbolic Dehn fillings on any given cusped hyperbolic 3-manifold. 

\ms
\item What is the largest volume $< v_\omega= 2.029883\ldots$
of a closed hyperbolic 3-manifold 
that does not arise from Dehn filling of $m004$ or $m003$?
(This would allow us to make the result of Theorem \ref{closed_unique} explicit.)

\ss
Experimental evidence suggests that the largest such volume is
$2.028853\ldots$
for the closed manifold $m006(-5,2)$.
(This is the largest such volume arising in the Hodgson-Weeks census
of low volume closed hyperbolic 3-manifolds with 
shortest closed geodesic of length $>0.3$,
and also for Dehn fillings on the ``magic manifold'' s776.)

\ms
\item (i) Are all hyperbolic Dehn fillings on $m004$ determined by their volumes,
amongst Dehn fillings on $m004$?\\
(ii) Are all hyperbolic Dehn fillings on $m003$ determined by their volumes, 
amongst Dehn fillings on $m003$?\\
(iii) There are some equalities  between volumes of Dehn fillings on $m003$ and $m004$,
observed in \cite{HMW}, and shown in equation (\ref{eq26}) above.
\if 0
Explicitly, for all pairs of relatively prime integers $p,q$ 
with $p$ a multiple of $4$, the manifolds $m004(p,q)$ and 
$m003(-p/2-q,2q)=m003(p/2-q,2q)$ are not homeomorphic, 
but have a common 2-fold cover so have equal volumes. 
**check this -- describe change in peripheral curves from WL(-5,1) to m003**
\fi
The Meyerhoff manifold of volume $0.981368\ldots$ also arises as Dehn filling on both:
$m004(5,1)=m003(-2,3) = m003(-1, 3)$.
Are these the only equalities between volumes of Dehn fillings on $m003$ and $m004$?

\ss
(These statements can be checked experimentally using SnapPy and Snap:
for instance they are true for all hyperbolic  Dehn fillings  
on $m003$ and $m004$
with $\text{volume} < 2.0289$.)

\ms
\item For which cusped hyperbolic 3-manifolds $N$ are there infinitely many hyperbolic Dehn fillings on $N$ that are uniquely determined by their volumes amongst Dehn fillings on $N$?

\ss
Our methods give some further results on this question when the cusp shape $\tau$  
lies in an imaginary quadratic number field.
But for $\tau$ algebraic of higher degree, our approach would require new results on   
the distribution of gaps in the values of the quadratic form 
$Q(a,b) =|a+ b \tau|^2$ for integers $a,b$. This question also arises
in the subject of ``Quantum Chaos'', and it is conjectured that the successive gaps 
should be randomly distributed according to a Poisson distribution
(see \cite{sarnak},  \cite{rudnick}, \cite{EMM}).
This would imply that arbitrarily large 2-sided gaps always exist.

\ms
\item  Are there sequences $x_i \to \infty$ with $N(x_i)=1$ for all $i$?

\ms
\item Are there sequences $x_i \to \infty$ such that the growth rate of the number $N_L(x_i)$ of link complements
with volume $x_i$ is faster than exponential, i.e with $\displaystyle\limsup_{i\to \infty} {\log N_L(x_i) \over \log x_i} = +\infty$?
\end{enumerate}

\end{document}